\newcommand{\methodname}[1]{Advanced SSS}
\def\ha{\hat{a}}
\newtheorem{theorem}{Theorem}
\newtheorem{corollary}{Corollary}[theorem]
\title{Advanced Distribution Theory for Significance in Scale Space}
\date{}
\author[1]{Rui Liu }
\author[1]{Jan Hannig}
\author[1,2]{J. S. Marron}
\affil[1]{Department of Statistics and Operations Research,  The University of North Carolina at Chapel Hill}
\affil[2]{School of Data Science and Society,  The University of North Carolina at Chapel Hill}
\begin{document}

\maketitle
\begin{abstract}
	Smoothing methods find signals in noisy data. A challenge for Statistical inference is the choice of smoothing parameter. SiZer addressed this challenge in one-dimension by detecting significant slopes across multiple scales, but was not a completely valid testing procedure. This was addressed by the development of an advanced distribution theory that ensures fully valid inference in the 1-D setting by applying extreme value theory. A two-dimensional extension of SiZer, known as Significance in Scale Space (SSS), was developed for image data, enabling the detection of both slopes and curvatures across multiple spatial scales. However, fully valid inference for 2-D SSS has remained unavailable, largely due to the more complex dependence structure of random fields. In this paper, we use a completely different probability methodology which gives an advanced distribution theory for SSS, establishing a valid hypothesis testing procedure for both slope and curvature detection. When applied to pure noise images (no true underlying signal), the proposed method  controls the Type I error, whereas the original SSS identifies spurious features across scales. When signal is present, the proposed method maintains a  high level of statistical power, successfully identifying important true slopes and curvatures in real data such as gamma camera images.
	
\end{abstract}

\section{Introduction}

In smoothing based statistical analysis identifying meaningful structure in complex data is challenging, particularly when features arise at multiple scales (smoothing parameters) and across varying dimensions \citep{lindeberg2013scale}.  The introduction of the SiZer (SIgnificant ZERo crossings) method by \citet{chaudhuri1999sizer} marked a pivotal advancement in addressing this challenge by offering a framework for multiscale smoothing analysis. SiZer, designed for the analysis of one-dimensional data, is based on statistical significance of derivatives of smoothed functions, allowing for the identification of significant features while controlling for spurious noise across multiple scales. Through its visual representation via color-coded maps, SiZer enabled intuitive understanding of the underlying data structure, making it a practical tool for exploratory data analysis \citep{chaudhuri2000scale}. Beyond kernel based implementations, extensions of SiZer include smoothing spline SiZer \citep{zhang2004simple, marron2005sizer}, local likelihood SiZer \citep{li2005local}, SiZer maps for additive models \citep{gonzalez2008sizer, martinez2005sizer}, SiZer for time series \citep{park2004dependent,rondonotti2007sizer,park2009improved,park2009sizer}, and circular SiZer for directional data \citep{oliveira2014circsizer,huckemann2016circular}. 

While SiZer led the way in scale space based statistical analysis of 1-d signals \citep{chaudhuri1999sizer}, it does not supply a fully valid testing procedure. \citet{hannig2006advanced}  developed an advanced distribution theory for 1-d SiZer that established valid size control based on the extreme value theory of \citet{hsing1996extremes}. 
The 2-d analog of SiZer, known as Significance in Scale Space (SSS), was introduced by \citet{ godtliebsen2002significance_cluster,godtliebsen2002significance, godtliebsen2004statistical}. The goal of SSS is to detect surface features such as peaks, valleys, and ridges in images.  However, the multiple comparison adjustment used there was a 2-d version of that used in \citet{chaudhuri1999sizer}, which was similarly only approximately valid.  
Fully valid statistical inference for 2-d SSS is particularly challenging because the dependence structure of random fields is much more complex than for 1-d random processes. In this paper, valid inference is provided using results from the extreme value theory for random fields of \citet{french2013asymptotic}.

Figure~\ref{fig:pure_noise} demonstrates the value of the proposed advanced SSS by comparing it with the classical SSS~\citep{godtliebsen2004statistical}. Both methods are applied to a  pure-noise image (no true underlying signal) with i.i.d. \(Y_{i,j}\sim N(0,1)\) on a \(64\times 64\) lattice, shown in the left panel of Figure~\ref{fig:pure_noise}. Advanced SSS (top row) reports no significant structure at any location for any  bandwidth \(h\in\{2,4,8,16\}\), which is consistent with the absence of signal by design.  In contrast, the classical SSS flags spurious features at \(h=2,8,\) and \(16\). This demonstrates the correct handling of type I error by the advanced SSS. 
To complement this specificity check, we also present images that contain genuine structure in Figures~\ref{fig:curvature_comparison} and~\ref{fig:gamma}, in Sections~\ref{sec_simulation} and~\ref{sec_realdata}. While  the advanced SSS flags fewer significant pixels, there is essentially no loss in underlying structure recovery in those examples. 
\begin{figure}[htbp]
	\center{\includegraphics[width=18cm]  {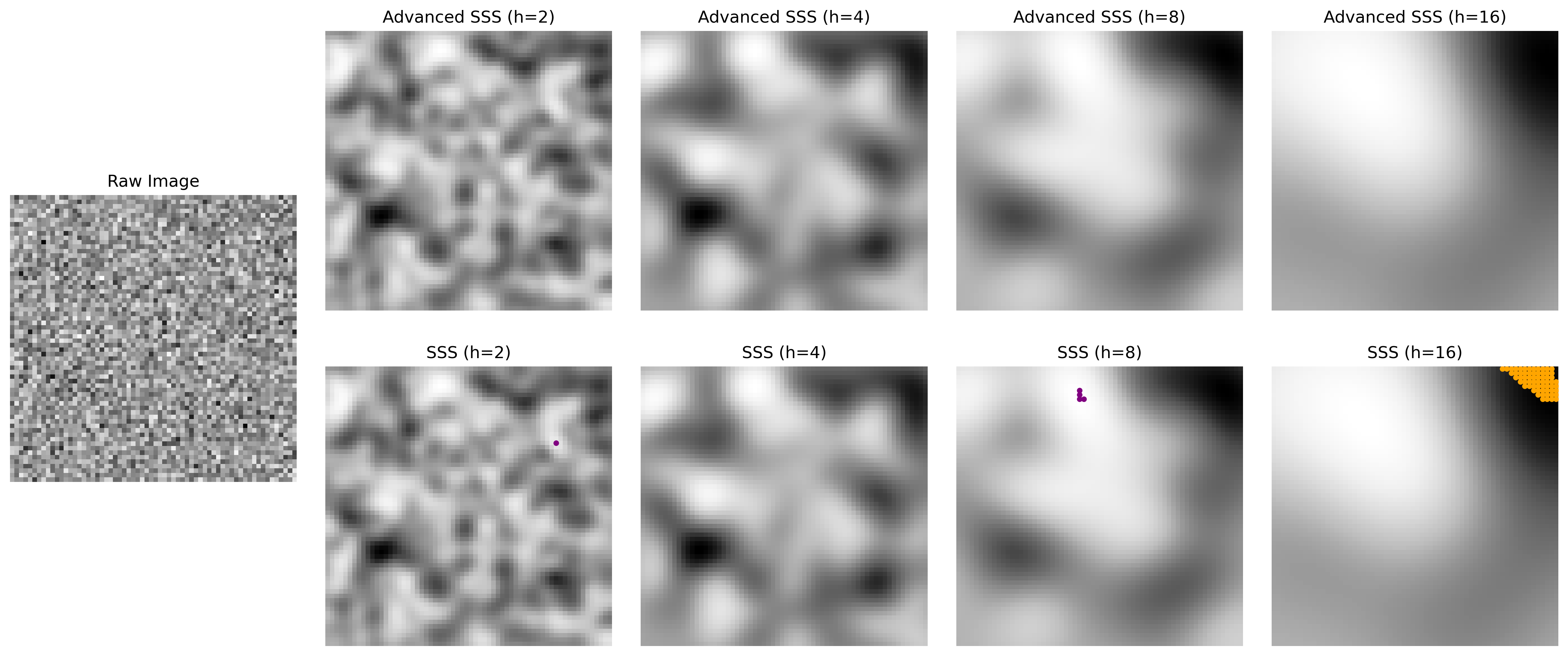}}
	\caption{
		Comparison of curvature identification results for a pure noise example from the proposed advanced SSS method (top row) and the original SSS method (bottom row) across four bandwidths \( h = 2, 4, 8, 16 \). The raw image is shown in the leftmost panel. Each colored dot represents a statistically significant curvature classification:  purple for ridge and orange for valley structures. Note the original SSS indicates spurious structure for three of the four scales.
	}
	\label{fig:pure_noise}
\end{figure} 

Applications of SiZer and SSS include ecology, environmental science, geoscience, econometrics, biomedicine, and genomics. For example, \citet{sonderegger2009using} and \citet{daily2012experimental} detected ecological thresholds and \citet{clements2010use} assessed recovery in ecosystems; \citet{ryden2010exploring} analyzed hurricane trends; \citet{rudge2008finding} identified peaks in geochemical distributions; \citet{zambom2013review} reviewed econometric uses; \citet{harezlak2020ps} investigated body weight profiles of HIV infected patients;  and \citet{liu2025significance} adapted SSS to Hi-C contact maps.

The rest of this paper is organized as follow. Section~\ref{sec-method} derives the advanced distribution theory for SSS. Section~\ref{sec_simulation} demonstrates statistical properties using simulations. Section~\ref{sec_realdata} presents gamma camera image analysis. Section~\ref{sec_discussion} concludes. Proofs can be found in the Appendix and additional simulations are in the supplementary materials.

\section{Field-wise Extreme Value Theory for SSS}
\label{sec-method}

To develop the distributional properties of Curvature SSS \citep{godtliebsen2004statistical}, we first establish a foundation in kernel smoothing techniques. The objective is to derive rigorous statistical inference for the curvature-based significance analysis in scale space by leveraging field-wise extreme value theory.

Assume the plane's size is $n\times n$, our notation for regression data is $(i,j,Y_{i,j})$ for integer image coordinates $i,j$, respectively. This can also be viewed in terms of in-fill asymptotics as \begin{equation}
	\label{eq_xij}
	{\bf x_{i,j}}=(x_{i,1},x_{j,2})=(i\Delta,j\Delta),
\end{equation}
where $\Delta\rightarrow 0$. Such data can originate from multiple sources and can be represented using various mathematical models. Suppose $Y_{i,j}=m({\bf x_{i,j}})+\epsilon_{i,j}$, where $m$ is the regression
function, and where $\epsilon_{i,j}$ are independent and identically distributed with mean 0 and variance $\sigma^2$.  Define at each location ${\bf x_{0}}=(i_0\Delta,j_0\Delta)$, an estimate of $m({\bf x_0})$ as
\begin{equation*}
	\begin{aligned}
		\hat{m}({\bf x_0}) = \tilde{a}_{00}, 
	\end{aligned}
\end{equation*}
where 
\begin{equation}
	\label{argmin}
	\begin{aligned}
		(\tilde{a}_{00},\tilde{a}_{10},\tilde{a}_{01},\tilde{a}_{20},\tilde{a}_{11},\tilde{a}_{02})
		=&\mathop{\arg\min}_{a_{00},a_{10},a_{01},a_{20},a_{11},a_{02}}\sum_{i=1}^{n}\sum_{j=1}^{n}\{Y_{i,j}-[a_{00}+a_{10}(i-i_0)+a_{01}(j-j_0)\\
		&+a_{20}(i-i_0)^2+2a_{11}(i-i_0)(j-j_0)+a_{02}(j-j_0)^2]\}^2K_{\tilde{h}}(i-i_0,j-j_0),\\
	\end{aligned}
\end{equation}
where $K_{\tilde{h}}$ is the spherically symmetric Gaussian density function on the 2-dimensional plane, with standard deviation $\tilde{h}$. The Gaussian kernel is chosen due to its well-established theoretical properties, particularly its ability to provide a smooth and continuous estimate  as discussed in \cite{chaudhuri1999sizer, chaudhuri2000scale}. For example, in one dimension, it possesses the variation diminishing property,  as $h$ increases, the count of bumps decreases or remains non-increasing, ensuring a controlled and interpretable smoothing effect.	

In the following, for notational convenience, we assume \eqref{eq_xij}. The random design regression case can be treated analogously. Set  $h=\tilde{h}/\Delta$ and introduce the notation:
\begin{equation*}
	G_{mk}^h = \sum_{i=1}^{n}\sum_{j=1}^{n} (i-i_0)^m (j-j_0)^k K_h(i-i_0,j-j_0), \quad H_{mk}^h = \sum_{i=1}^{n}\sum_{j=1}^{n} Y_{i,j} (i-i_0)^m (j-j_0)^k K_h(i-i_0,j-j_0).
\end{equation*}

Let us define $g$ to be the double sum minimized in \eqref{argmin}. To obtain the optimal estimates of the parameters, we differentiate the objective function with respect to each coefficient. This leads to a system of linear equations in $a_{00},a_{10},a_{01},a_{20},a_{11},a_{02}$: 
\begin{equation}
	\label{eq-deriv}
	\begin{aligned}
		0 &= \frac{\mathrm{d} g}{\mathrm{d}a_{00}} = H^h_{00} - a_{00}G^h_{00} - a_{10}G^h_{10} - a_{01}G^h_{01} - a_{20}G^h_{20} - a_{11}G^h_{11} - a_{02}G^h_{02}, \\
		0 &= \frac{\mathrm{d} g}{\mathrm{d}a_{10}} = H^h_{10} - a_{00}G^h_{10} - a_{10}G^h_{20} - a_{01}G^h_{11} - a_{20}G^h_{30} - a_{11}G^h_{21} - a_{02}G^h_{12}, \\
		0 &= \frac{\mathrm{d} g}{\mathrm{d}a_{01}} = H^h_{01} - a_{00}G^h_{01} - a_{10}G^h_{11} - a_{01}G^h_{02} - a_{20}G^h_{21} - a_{11}G^h_{12} - a_{02}G^h_{03}, \\
		0 &= \frac{\mathrm{d} g}{\mathrm{d}a_{20}} = H^h_{20} - a_{00}G^h_{20} - a_{10}G^h_{30} - a_{01}G^h_{21} - a_{20}G^h_{40} - a_{11}G^h_{31} - a_{02}G^h_{22}, \\
		0 &= \frac{\mathrm{d} g}{\mathrm{d}a_{11}} = H^h_{11} - a_{00}G^h_{11} - a_{10}G^h_{21} - a_{01}G^h_{12} - a_{20}G^h_{31} - a_{11}G^h_{22} - a_{02}G^h_{13}, \\
		0 &= \frac{\mathrm{d} g}{\mathrm{d}a_{02}} = H^h_{02} - a_{00}G^h_{02} - a_{10}G^h_{12} - a_{01}G^h_{03} - a_{20}G^h_{22} - a_{11}G^h_{13} - a_{02}G^h_{04}.
	\end{aligned}
\end{equation}

Due to the symmetry of the kernel function, when either $m$ or $k$ is odd, we approximate $G^h_{mk} \approx 0$ as $n \to \infty$.
A useful integral approximation is:
\begin{equation*}
	\begin{aligned}
		G^h_{20} &= \sum_{i=1}^{n}\sum_{j=1}^{n}(i-i_0)^2K_h(i-i_0,j-j_0) \\
		&\approx \int_{-\infty}^{\infty}\int_{-\infty}^{\infty}(w-i_0)^2\frac{1}{2\pi h^2}\exp\left(-\frac{(w-i_0)^2+(z-j_0)^2}{2h^2}\right)dwdz \\
		&= \int_{-\infty}^{\infty}(w-i_0)^2\frac{1}{\sqrt{2\pi h^2}}\exp\left(-\frac{(w-i_0)^2}{2h^2}\right)dw=h^2. 
	\end{aligned}
\end{equation*}
Similar integral approximations lead to  $G^h_{00} \approx 1, 
G^h_{20}=  G^h_{02}\approx h^2,
G^h_{22} \approx h^4, G^h_{40} = G^h_{04} \approx 3h^4.$ 
By substituting these approximations into \eqref{eq-deriv}, we define  estimates for the first and second derivatives: 
\begin{equation*}
	\left\{
	\begin{aligned}
		\ha_{10} &= \frac{H_{10}}{h^2}, \quad \ha_{01} = \frac{H_{01}}{h^2}, \\
		\ha_{11} &= \frac{H_{11}}{2h^4}, \quad \ha_{20} = \frac{H_{20} - H_{00}h^2}{2h^4}, \quad \ha_{02} = \frac{H_{02} - H_{00}h^2}{2h^4}.
	\end{aligned}
	\right.
\end{equation*}

These estimators provide the basis for the extreme value analysis of field-wise slope and curvature in scale space. The following subsections will present the theorems governing the statistical significance of first and second derivative estimates. The gradient (vector of first derivatives) estimate is crucial for identifying edges and directional changes in the field, allowing for the detection of significant structural features. The Hessian (matrix of second derivatives) plays a key role in characterizing curvature, distinguishing between local maxima, minima, and saddle points. Together, these derivatives enable a comprehensive understanding of the underlying geometric properties of the observed field, facilitating robust  significance assessment.

The foundation of our analysis is a two-dimensional extension of Theorem 2.2 in \citet{hsing1996extremes}, which played a crucial role in developing the advanced distribution theory for one-dimensional SiZer in \cite{hannig2006advanced}. To extend these results to two dimensions, we leverage Theorem 1 from \citet{french2013asymptotic}, which builds on the framework of \citet{hsing1996extremes} by incorporating conditions for spatial dependence and deriving limiting distributions. 

Let $\xi_{n,i,j}$, $i,j=0,1,2,\ldots,n$ for $n=1,2,\ldots$ be a sequence of stationary (in $i,j$) random fields indexed by $n$. Assume that 
\begin{equation*}
	E\xi_{n,i,j}=0\quad\text{and}\quad E\xi_{n,i,j}^2=1.
\end{equation*}
Writing $\rho_{n,i,j}=E(\xi_{n,l,k}\xi_{n,l+i,k+j})$, let $\delta_{0,0}=0$ and for $i,j \in \mathbb{Z}\setminus (0,0)$ assume that
\begin{equation}
	\label{2d_assp1.2}
	\delta_{i,j}=\lim_{n\rightarrow\infty}(1-\rho_{n,i,j})\log n \in(0,\infty].
\end{equation}
For every $x\in(-\infty,\infty)$, let
$u_n(x)=x/a_n+b_n,$
where 
$
a_n=(2\log n)^{1/2}
$
and
\begin{equation*}
	b_n=(2\log n)^{1/2}-\frac{1}{2}(2\log n)^{-1/2}(\log\log n+\log4\pi).
\end{equation*}
It is well-known, e.g. from \citep{leadbetter2012extremes} that 
\begin{equation*}
	\label{un_approx}
	\lim_{n\rightarrow\infty}n[1-\Phi(u_n(x))]=e^{-x}
\end{equation*}

Next we introduce Theorem 1 in \citet{french2013asymptotic}, which is a 2-dimensional extension of Theorem  2.2 in \citet{hsing1996extremes}. 
\begin{theorem}[\citet{french2013asymptotic}]
	\label{2d_thm2.2}
	Assume that (\ref{2d_assp1.2}) holds. Also assume that there  exists a sequence of positive integers $l_n$ such that 
	\begin{equation}
		\label{2d_2.1}
		\frac{l_n}{n}\rightarrow0,
	\end{equation}
	and for which 
	\begin{equation}
		\label{2d_2.5}
		\lim_{n \to \infty} \sup_{\sqrt{i^2+j^2} \geq l_n} |\rho_{n,i,j}| \log n = 0,
	\end{equation}
	and
	\begin{equation}
		\label{2d_2.6}	\lim_{m \to \infty} \lim_{n \to \infty} \sup \sum_{(i,j) \in \{0,1,...,l_n\}^2 \setminus \{0,1,...,m\}^2} n^{-2\frac{1-\rho_{n,i,j}}{1+\rho_{n,i,j}}} \frac{\left( \log n \right)^{-\frac{\rho_{n,i,j}}{1+\rho_{n,i,j}}}}{\sqrt{1-\rho^2_{n,i,j}}}   = 0.
	\end{equation}
	Then
	\begin{equation}
		\label{2d_lim}
		\lim_{n\rightarrow\infty}P(\max_{1\leq i,j\leq n}\xi_{n,i,j}\leq u_{n^2}(x))=\exp(-\vartheta\exp(-x)),\quad -\infty<x<\infty,
	\end{equation}
	where
	\begin{equation*}
		\vartheta=P(E/4+\sqrt{\frac{1}{2}\delta_{i,j}}W_{i,j}\leq\delta_{i,j}\text{ for all }(i,j) \in \{\mathbb{N} \times \{0\}\} \cup \{\mathbb{Z} \times \mathbb{N}\}),
	\end{equation*}
	where $E$ is a standard exponential random variable independent of $W_{i,j}$ and $\{W_{i,j}\}$ have a jointly normal distribution with mean 0, variance 1 and covariance
	\begin{equation}
		\begin{aligned}
			\label{2d_2.4}
			EW_{i_1,j_1}W_{i_2,j_2}=\frac{\delta_{i_1,j_1}+\delta_{i_2,j_2}-\delta_{i_1-i_2,j_1-j_2}}{2(\delta_{i_1,j_1}\delta_{i_2,j_2})^{1/2}}.
		\end{aligned}
	\end{equation}
\end{theorem}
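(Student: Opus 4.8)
The plan is to follow the blocking-and-Poisson-approximation method of \citet{hsing1996extremes}, carried out on the two-dimensional lattice and combined with the normal comparison (Slepian/Berman) inequality to control dependence; since the limiting object features the Gaussian field $\{W_{i,j}\}$, I work throughout with Gaussian $\xi_{n,i,j}$, the general case reducing to this by the same comparison. The regime is the Hüsler--Reiss one: \eqref{2d_assp1.2} forces $1-\rho_{n,i,j}=\delta_{i,j}/\log n+o(1/\log n)$, so fixed-lag correlations tend to $1$ at exactly the rate that makes exceedances of the level $u_{n^2}(x)$ form clusters of random but tight size, and $\vartheta$ is the resulting extremal constant. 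The anchor is the identity recorded above, which in the form $n^2[1-\Phi(u_{n^2}(x))]\to e^{-x}$ says the expected number of exceedances of $u_{n^2}(x)$ among the $n^2$ sites of $\{1,\dots,n\}^2$ tends to $e^{-x}$ under independence; everything reduces to showing that dependence only replaces this mean by $\vartheta e^{-x}$.

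\emph{Step 1 (long-range decorrelation and blocking).} Using the normal comparison lemma, replace $\{\xi_{n,i,j}\}$ by a stationary Gaussian field whose correlations coincide with $\rho_{n,i,j}$ for $\sqrt{i^2+j^2}<l_n$ and vanish for $\sqrt{i^2+j^2}\ge l_n$. The error this introduces in $P(\max_{1\le i,j\le n}\xi_{n,i,j}\le u_{n^2}(x))$ is bounded by a constant times $\sum_{\sqrt{i^2+j^2}\ge l_n}|\rho_{n,i,j}|\exp\{-u_{n^2}(x)^2/(1+|\rho_{n,i,j}|)\}$, and since $u_{n^2}(x)^2\sim 4\log n$ while \eqref{2d_2.5} gives $\sup_{\sqrt{i^2+j^2}\ge l_n}|\rho_{n,i,j}|\log n\to 0$, this tends to $0$. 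Then partition $\{1,\dots,n\}^2$ into big squares of side of order $n/k_n$, choosing $k_n$ with $l_n\ll n/k_n\ll n$, which \eqref{2d_2.1} permits, separated by guard strips of width $l_n$; the strips contain $o(n^2)$ sites and so carry no exceedances in the limit, the remaining big squares are $\ge l_n$-separated hence independent, and $P(\max\le u_{n^2}(x))$ factorises over them. The problem is reduced to the asymptotics of the non-exceedance probability over a single big square.

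\emph{Step 2 (local field and identification of $\vartheta$).} Tile a big square by small squares of side $l_n$ and order the lattice by the half-plane (lexicographic) relation $(0,0)\prec(i,j)$ iff $(i,j)\in\{\mathbb{N}\times\{0\}\}\cup\{\mathbb{Z}\times\mathbb{N}\}$, so each maximal cluster of exceedances has a unique $\prec$-largest site. Condition on $\{\xi_{n,0,0}=u_{n^2}(x)+s/a_{n^2}\}$: the Mills-ratio estimate gives $a_{n^2}(\xi_{n,0,0}-u_{n^2}(x))\mid\{\xi_{n,0,0}>u_{n^2}(x)\}\Rightarrow E$, the standard exponential of the theorem; for a fixed neighbour $(i,j)$, using $a_{n^2}^2=4\log n$ and \eqref{2d_assp1.2}, the conditional mean of $a_{n^2}(\xi_{n,i,j}-u_{n^2}(x))$ tends to $s-4\delta_{i,j}$ and its conditional variance to $8\delta_{i,j}$, while for two neighbours the conditional covariance $a_{n^2}^2(\rho_{n,\,i_1-i_2,\,j_1-j_2}-\rho_{n,i_1,j_1}\rho_{n,i_2,j_2})$ tends to $4(\delta_{i_1,j_1}+\delta_{i_2,j_2}-\delta_{i_1-i_2,j_1-j_2})$. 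Consequently the event that $(0,0)$ is the $\prec$-largest exceedance of its cluster, namely $a_{n^2}(\xi_{n,i,j}-u_{n^2}(x))\le 0$ for all $(i,j)\succ(0,0)$, converges to the event $\{E/4+\sqrt{\delta_{i,j}/2}\,W_{i,j}\le\delta_{i,j}\text{ for all such }(i,j)\}$ — dividing the limiting inequality by $4$ produces the coefficient $1/4$ on $E$ and $\sqrt{\delta_{i,j}/2}$ on $W_{i,j}$, and normalising the limiting residual field to unit variance turns the covariance limit above into exactly \eqref{2d_2.4}, with $E$ independent of $\{W_{i,j}\}$ since the residuals are independent of $\xi_{n,0,0}$. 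Thus $\vartheta$ is precisely the limiting probability that an exceedance site is the $\prec$-top of its cluster; feeding this into the standard cluster bookkeeping, the number of clusters over a big square converges to a Poisson law with mean $\vartheta e^{-x}$, and multiplying over the big squares and letting $k_n\to\infty$ yields \eqref{2d_lim}.

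\emph{Main obstacle.} The delicate step is the medium range — sites at lattice distance between a fixed $m$ and $l_n$ — where one must show that an exceedance at $(0,0)$ being accompanied by a separately counted exceedance at such an $(i,j)$ has asymptotically negligible probability, so that the cluster can be localised to the fixed window $\{0,\dots,m\}^2$ before $m\to\infty$. This is exactly what \eqref{2d_2.6} provides: a Bonferroni/second-moment bound for a bivariate Gaussian pair gives $P(\xi_{n,0,0}>u_{n^2}(x),\,\xi_{n,i,j}>u_{n^2}(x))$ of order $n^{-2(1-\rho_{n,i,j})/(1+\rho_{n,i,j})}(\log n)^{-\rho_{n,i,j}/(1+\rho_{n,i,j})}/\sqrt{1-\rho_{n,i,j}^2}$ up to constants, and \eqref{2d_2.6} asserts precisely that the sum of these over the medium-range annulus vanishes in the iterated limit $\lim_m\lim_n$. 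Relative to the one-dimensional argument of \citet{hsing1996extremes}, the additional difficulty is that clusters now live on a two-dimensional lattice and the big-square boundaries have perimeter growing with $n$ rather than bounded size, so one must separately check that exceedances on these boundaries have vanishing expected count; once that and the medium-range control are in place, the Poisson convergence and the identification of $\vartheta$ go through as above.
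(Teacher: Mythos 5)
The paper offers no proof of this statement: Theorem~\ref{2d_thm2.2} is imported verbatim from \citet{french2013asymptotic} (their Theorem~1, itself a two-dimensional version of Theorem~2.2 of \citet{hsing1996extremes}), so there is no in-paper argument to compare yours against. Judged on its own terms, your sketch correctly reconstructs the architecture of the cited proof, and the constants all check out: with $a_{n^2}^2=4\log n$ and $1-\rho_{n,i,j}\sim\delta_{i,j}/\log n$, the conditional mean $s-4\delta_{i,j}$, conditional variance $8\delta_{i,j}$, and conditional covariance $4(\delta_{i_1,j_1}+\delta_{i_2,j_2}-\delta_{i_1-i_2,j_1-j_2})$ do yield exactly $E/4+\sqrt{\delta_{i,j}/2}\,W_{i,j}\le\delta_{i,j}$ over the half-plane and the correlation formula~\eqref{2d_2.4}; you also correctly assign condition~\eqref{2d_2.5} to the long-range decorrelation/blocking step and condition~\eqref{2d_2.6} to the medium-range cluster-localisation step. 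Two caveats. First, this is an outline rather than a proof: the Poisson approximation and cluster bookkeeping in Step~2, the uniformity needed to pass the conditional limits through the infinite half-plane intersection before letting $m\to\infty$, and the negligibility of the guard strips are asserted, not established. Second, your opening remark that the general case reduces to the Gaussian one ``by the same comparison'' is not justified and should be dropped; the result in \citet{hsing1996extremes} and \citet{french2013asymptotic} is proved for Gaussian triangular arrays, and the Gaussianity hypothesis (omitted in the paper's restatement, presumably an oversight, since it is restored in Theorems~\ref{SSS_extreme_1st} and~\ref{SSS_extreme}) is essential to the Mills-ratio and normal-comparison computations you use.
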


In what  follows, we need to verify Conditions \eqref{2d_assp1.2}-\eqref{2d_2.6}.

\subsection{Significance of Slopes}
\label{sec-extreSSS_1st}
Next, we developed limiting distributions for assessing field-wise statistical significance of the gradient estimate.
At any location ${\bf x_0}=(i_0\Delta,j_0\Delta)$, consider the estimated directional derivative
\begin{equation}
	\label{direct_1st}
	\begin{aligned}
		&(u,v)\left(
		\begin{matrix}
			\ha_{10}\\
			\ha_{01}
		\end{matrix}
		\right)=\ha_{10}u+\ha_{01}v=\frac{H^h_{10}u+H^h_{01}v}{h^2}
		=\frac{1}{h^2}\sum_{i=1}^n\sum_{j=1}^nY_{ij}[u(i-i_0)+v(j-j_0)]K_h(i-i_0,j-j_0),
	\end{aligned}
\end{equation}
where $u, v$ satisfy $u^2+v^2=1$ and indicate the direction of the derivative. 
Let $T_{l,k}=\sum_{r=1}^{n}\sum_{q=1}^{n}W^h_{r-l,q-k}Y_{r,q}$,
where now
\begin{equation}
	\label{W_1st}
	W^h_{r-l,q-k}=[u(r-l)+v(q-k)]K_{h}(r-l,q-k).
\end{equation}

Then we can calculate the limit of the autocorrelation function  (details can be found in \ref{Derive_1st_corr})
\begin{equation}
	\label{eq_corr_1st}
	\begin{aligned}
		\lim_{n \to \infty}Corr(T_{l,k},T_{l+i,k+j})=&\lim_{n \to \infty}\frac{\sum_{r=i}^{n}\sum_{q=j}^{n}W^h_{r,q}W^h_{r-i,q-j}}{\sum_{r=1}^{n}\sum_{q=1}^{n}(W^h_{r,q})^2}\\
		=&\frac{\int\int[u(x-i)+v(y-j)]K_{h}(x-i,y-j)(ux+vy)K_{h}(x,y)dxdy}{\int\int(ux+vy)^2K_{h}(x,y)^2dxdy}\\
		=&(1-\frac{1}{2h^2}(iu+jv)^2)\exp(-\frac{(i^2+j^2)}{4h^2}).
	\end{aligned}
\end{equation}
For convenience, we standardize $T_{l,k}$ as
\begin{equation}
	\label{standardize_1st}
	\tilde{T}_{l,k}=\frac{\sum_{r=1}^{n}\sum_{q=1}^{n}W^h_{r-l,q-k}Y_{r,q}}{\sigma\sqrt{\sum_{r=1}^{n}\sum_{q=1}^{n}(W^h_{r-l,q-k})^2}},
\end{equation}
so the variance of $\tilde{T}_{l,k}$ is 1.
A logical approach to embedding our SSS into an array of stationary Gaussian random variables as in \citet{french2013asymptotic} is to assume $1/h=C/\sqrt{\log g}$. Under these assumptions, we have the following theorem.
\begin{theorem}
	\label{SSS_extreme_1st}
	Consider a  mean-0, variance-1 Gaussian random fields $\hat{T}_{g,i,j}$ indexed by $i,j$. If for each fixed $g$ the random field $\{\hat{T}_{g,i,j}:1\leq i,j\leq g\}$  is stationary with correlation
	\begin{equation}
		\label{eq_corr_g_1st}
		\rho_{g,i,j}=[1-\frac{C^2}{2\log g}(iu+jv)^2]\exp(-\frac{(i^2+j^2)C^2}{4\log g}),
	\end{equation}
	where $C>0$ and $u^2+v^2=1$. Then
	\begin{equation*}
		\lim_{g\rightarrow\infty}P\left\{\max_{i,j=1,...,g}\hat{T}_{g,i,j}\leq u_{g^2}(x)\right\}=\exp(-\vartheta \exp(-x)),
	\end{equation*}
	where 
	\begin{equation}
		\label{eq-vartheta-1st}
		\vartheta\leq2\Phi(C)-1.
	\end{equation}
\end{theorem}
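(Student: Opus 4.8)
The plan is to recognize Theorem~\ref{SSS_extreme_1st} as a direct instance of Theorem~\ref{2d_thm2.2} (Theorem~1 of \citet{french2013asymptotic}), with the index $n$ there playing the role of $g$ and $\xi_{n,i,j}$ replaced by $\hat T_{g,i,j}$. The work then splits into three pieces: (i) identify the limiting constants $\delta_{i,j}$; (ii) verify \eqref{2d_assp1.2}, \eqref{2d_2.1}, \eqref{2d_2.5}, \eqref{2d_2.6} for the correlation \eqref{eq_corr_g_1st}; (iii) read off $\vartheta$ and bound it by $2\Phi(C)-1$. For (i), expanding $\exp\!\big(-\tfrac{(i^2+j^2)C^2}{4\log g}\big)=1-\tfrac{(i^2+j^2)C^2}{4\log g}+O((\log g)^{-2})$ in \eqref{eq_corr_g_1st} gives
\[
\delta_{i,j}=\lim_{g\to\infty}(1-\rho_{g,i,j})\log g=\frac{C^2}{4}(i^2+j^2)+\frac{C^2}{2}(iu+jv)^2,
\]
a positive-definite quadratic form on $\mathbb{Z}^2$; in particular $\delta_{i,j}\in(0,\infty)$ for all $(i,j)\ne(0,0)$, so \eqref{2d_assp1.2} holds.

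For (ii), take $l_g=\lceil(\log g)^{2/3}\rceil$, so that $l_g/g\to0$ (which is \eqref{2d_2.1}) and $l_g^2/\log g\to\infty$. Then \eqref{2d_2.5} holds because for $\sqrt{i^2+j^2}\ge l_g$ the Gaussian factor $\exp(-C^2(i^2+j^2)/(4\log g))$ in \eqref{eq_corr_g_1st} decays faster than any negative power of $\log g$ and dominates the at-most-linear prefactor $|1-\tfrac{C^2}{2\log g}(iu+jv)^2|$, forcing $\sup_{\sqrt{i^2+j^2}\ge l_g}|\rho_{g,i,j}|\log g\to0$. For \eqref{2d_2.6}, split the sum over $\{0,\dots,l_g\}^2\setminus\{0,\dots,m\}^2$ according to whether the lag $(i,j)$ is small enough that $\rho_{g,i,j}\to1$ — in which case $1-\rho_{g,i,j}\sim\delta_{i,j}/\log g$, each summand tends to $e^{-\delta_{i,j}}/\sqrt{2\delta_{i,j}}$, and $\sum_{\max(i,j)>m}e^{-\delta_{i,j}}/\sqrt{2\delta_{i,j}}\to0$ as $m\to\infty$ since $\delta_{i,j}\ge\tfrac{C^2}{4}(i^2+j^2)$ — or large enough that $\rho_{g,i,j}$ is bounded away from $\pm1$, in which case $g^{-2(1-\rho_{g,i,j})/(1+\rho_{g,i,j})}\le g^{-c}$ for some $c>0$ while there are only $O(l_g^2)$ such terms, so that part vanishes. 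I expect this to be the main obstacle: one must locate the threshold between the two regimes and make both estimates uniform in $g$, the near-$1$ regime being a genuine $2$-D analogue of the local-clustering analysis in \citet{hannig2006advanced}.

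Granting (i)--(ii), Theorem~\ref{2d_thm2.2} gives the stated Gumbel limit with $\vartheta=P\big(E/4+\sqrt{\delta_{i,j}/2}\,W_{i,j}\le\delta_{i,j},\ \forall\,(i,j)\in(\mathbb{N}\times\{0\})\cup(\mathbb{Z}\times\mathbb{N})\big)$, where $E$ is standard exponential independent of the unit-variance jointly Gaussian $W_{i,j}$. To bound this, keep only the single constraint at $\mathbf n^{\star}\in\{(1,0),(0,1)\}$ (both are in the index set), chosen so $\delta_{\mathbf n^{\star}}=\min(\delta_{1,0},\delta_{0,1})$; discarding the other constraints only enlarges the event, so
\[
\vartheta\le P\!\left(\tfrac{E}{4}+\sqrt{\tfrac12\delta_{\mathbf n^{\star}}}\,W_{\mathbf n^{\star}}\le\delta_{\mathbf n^{\star}}\right)=\mathbb{E}\!\left[\Phi\!\left(\sqrt{2\delta_{\mathbf n^{\star}}}-\frac{E}{2\sqrt{2\delta_{\mathbf n^{\star}}}}\right)\right].
\]
The argument of $\Phi$ is increasing in $\delta$ for each fixed $E\ge0$, so this expectation is nondecreasing in $\delta_{\mathbf n^{\star}}$; and $u^2+v^2=1$ forces $\delta_{\mathbf n^{\star}}=\tfrac{C^2}{4}+\tfrac{C^2}{2}\min(u^2,v^2)\le\tfrac{C^2}{2}$. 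Hence $\vartheta\le\mathbb{E}[\Phi(C-E/(2C))]$, and the substitution $r=C-E/(2C)$ followed by integration by parts (using $\int e^{2Cr}\phi(r)\,dr=e^{2C^2}\Phi(r-2C)$) evaluates this last expectation to exactly $2\Phi(C)-1$, establishing \eqref{eq-vartheta-1st}.
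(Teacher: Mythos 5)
Your proposal is correct and follows essentially the same route as the paper: apply Theorem~\ref{2d_thm2.2} with $\delta_{i,j}=\bigl[\tfrac{(iu+jv)^2}{2}+\tfrac{i^2+j^2}{4}\bigr]C^2$, verify conditions \eqref{2d_2.1}--\eqref{2d_2.6} via the same two-regime split of the lag sum, and bound $\vartheta$ by retaining only the constraint at $(1,0)$ or $(0,1)$, which gives $2\Phi(\sqrt{2\delta_{1,0}})\wedge 2\Phi(\sqrt{2\delta_{0,1}})-1\le 2\Phi(C)-1$ exactly as in the appendix (you evaluate the same Gaussian--exponential integral, just in a slightly different order via monotonicity in $\delta$). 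The only piece you leave as a sketch is the uniform-in-$g$ control needed to justify the limit of the sum in \eqref{2d_2.6}; the paper supplies this with explicit two-sided Lagrange-remainder bounds on $1-\rho_{g,i,j}$ in the near-one regime and a uniform bound $\rho_{g,i,j}\le\delta_y<1$ elsewhere, but your decomposition is the right one and the remaining estimates are routine.
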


The proof is in Appendix \ref{proof_1st_thm}. To interpret the significance of slope structures, Theorem~\ref{SSS_extreme_1st} focuses on evaluating directional derivatives along specific directions individually. Two important directions we considered are $0$  and $90$ degrees. We essentially analyze variance in these directions by modeling two random fields $\tilde{T}^{(0)}_{l,k}$ and $\tilde{T}^{(90)}_{l,k}$. Moreover, we combine the significance from different directions and derive the following Corollary. 

For $\tilde{T}^{(0)}_{l,k}$ and $\tilde{T}^{(90)}_{l,k}$, we have the covariance of them as below:

\begin{equation*}
	\begin{aligned}
		\lim_{n \to \infty}Corr(\tilde{T}^{(0)}_{l,k},\tilde{T}^{(90)}_{l,k})&=\lim_{n \to \infty}Corr(H^h_{10},H^h_{01})\\
		&=\lim_{n \to \infty}\frac{\sum_{i=1}^n\sum_{j=1}^n(i-i_0)(j-j_0)K_h(i-i_0,j-j_0)}{\sqrt{\sum_{i=1}^n\sum_{j=1}^n(i-i_0)^2K_h(i-i_0,j-j_0)\sum_{i=1}^n\sum_{j=1}^n(j-j_0)^2K_h(i-i_0,j-j_0)}}\\
		&=\frac{\int\int(x-i_0)(y-j_0)K_h(x-i_0,y-j_0)dxdy}{\sqrt{\int\int(x-i_0)^2K_h(x-i_0,y-j_0)dxdy\int\int(y-j_0)^2K_h(x-i_0,y-j_0)dxdy}}\\
		&=0
	\end{aligned}
\end{equation*}

\begin{corollary}
	\label{corollary-1st-max}
	Consider two mean-zero, variance-one Gaussian random fields  
	\(\hat{T}^{(1)}_{g,i,j}\) and \(\hat{T}^{(2)}_{g,i,j}\), each indexed by \(1 \le i,j \le g\) and satisfying
	assumption~\eqref{eq_corr_g_1st}. Let \(u_{g^2}(x)\) be the threshold corresponding to marginal tail probability \(\alpha/2\). Then for every fixed \(x \in \mathbb{R}\),
	\[
	\liminf_{g \to \infty}
	P\left\{
	\max_{i,j = 1,\dots, g}
	\left[
	(\hat{T}^{(1)}_{g,i,j})^2 \vee (\hat{T}^{(2)}_{g,i,j})^2
	\right]
	\le u_{g^2}(x)^2
	\right\}
	\ge 4\exp(-\vartheta \exp(-x))-3,
	\]
	where the right-hand side reflects Bonferroni control of the family-wise error rate over the two directional tests.
\end{corollary}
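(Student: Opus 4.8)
The plan is to collapse the two-sided, two-direction maximum in the statement into an intersection of four \emph{one-sided} field maxima, each of which is directly governed by Theorem~\ref{SSS_extreme_1st}, and then to finish with a Bonferroni bound. I would first rewrite the event: since $u_{g^2}(x)=x/a_{g^2}+b_{g^2}\to\infty$, the threshold $u_{g^2}(x)$ is nonnegative for all large $g$, and using the pixelwise equivalence $|t|\le c\iff t\le c\text{ and }-t\le c$,
\[
\Bigl\{\max_{1\le i,j\le g}\bigl[(\hat T^{(1)}_{g,i,j})^2\vee(\hat T^{(2)}_{g,i,j})^2\bigr]\le u_{g^2}(x)^2\Bigr\}=\bigcap_{k=1}^{4}A_k^{(g)},
\]
where $A_1^{(g)}=\{\max_{i,j}\hat T^{(1)}_{g,i,j}\le u_{g^2}(x)\}$, $A_2^{(g)}=\{\max_{i,j}(-\hat T^{(1)}_{g,i,j})\le u_{g^2}(x)\}$, and $A_3^{(g)},A_4^{(g)}$ are the analogues built from $\hat T^{(2)}$.

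Next I would observe that each of the four fields $\hat T^{(1)},-\hat T^{(1)},\hat T^{(2)},-\hat T^{(2)}$ is again a mean-zero, variance-one Gaussian random field with the correlation function \eqref{eq_corr_g_1st}, because sign reversal changes neither the mean, the variance, Gaussianity, nor the correlation. Theorem~\ref{SSS_extreme_1st} therefore applies to each, giving $\lim_{g\to\infty}P(A_k^{(g)})=\exp(-\vartheta\exp(-x))$ for $k=1,2,3,4$ with one common $\vartheta$ (obeying $\vartheta\le 2\Phi(C)-1$ by \eqref{eq-vartheta-1st}) since all four carry the same correlation. Then I would apply Boole's inequality to the complements,
\[
P\Bigl(\bigcap_{k=1}^{4}A_k^{(g)}\Bigr)=1-P\Bigl(\bigcup_{k=1}^{4}(A_k^{(g)})^c\Bigr)\ge 1-\sum_{k=1}^{4}\bigl(1-P(A_k^{(g)})\bigr)=\sum_{k=1}^{4}P(A_k^{(g)})-3,
\]
and take $\liminf_{g\to\infty}$ of both sides, substituting the four marginal limits to get $\liminf_{g\to\infty}P\bigl(\bigcap_k A_k^{(g)}\bigr)\ge 4\exp(-\vartheta\exp(-x))-3$. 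The clause ``threshold corresponding to marginal tail probability $\alpha/2$'' is only a normalization convention for applications (it fixes how $x$ is tied to a desired level and how the four one-sided pieces aggregate to an $\alpha$-level procedure over the two directions); it plays no role in the limit, which is stated entirely in terms of the fixed $x$.

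I do not expect a deep obstacle. The one point of genuine care is that this route uses only the \emph{marginal} extreme-value limit of each field and never the joint law of $(\hat T^{(1)},\hat T^{(2)})$: the zero same-pixel correlation between the $0^\circ$ and $90^\circ$ directional fields computed just above the corollary does \emph{not} imply asymptotic independence of their field maxima, so one cannot replace the bound by the product $\exp(-2\vartheta\exp(-x))$ without a genuine multivariate extreme-value theory for random fields---the Bonferroni step is precisely what keeps the statement valid, at the accepted cost of conservativeness. A secondary, routine check is that the decomposition in the first step needs $u_{g^2}(x)\ge 0$, which holds for all large $g$, so the $\liminf$ conclusion is unaffected; and the same Boole step gives the exact finite-$g$ inequality verbatim if one prefers it to the asymptotic version.
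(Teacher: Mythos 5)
Your proof is correct and follows essentially the same route as the paper's: a Bonferroni/union bound combining the marginal extreme-value limits supplied by Theorem~\ref{SSS_extreme_1st}. The only (cosmetic) difference is that you split directly into four one-sided events rather than first forming the two-sided maxima $M^{(k)}_g=\max_{i,j}|\hat T^{(k)}_{g,i,j}|$, which in fact sidesteps the paper's slightly imprecise claim that $P\{M^{(k)}_g > u_{g^2}(x)\}$ converges exactly to $2-2\exp(-\vartheta\exp(-x))$ --- only an asymptotic upper bound on that quantity is needed, and that is all either argument actually uses.
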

\begin{proof}
	Let us define
	\[
	M^{(1)}_g = \max_{i,j} |\hat{T}^{(1)}_{g,i,j}|, \quad
	M^{(2)}_g = \max_{i,j} |\hat{T}^{(2)}_{g,i,j}|.
	\]
	We are interested in bounding the probability
	\[
	P\left\{
	\max_{i,j} \left[(\hat{T}^{(1)}_{g,i,j})^2 \vee (\hat{T}^{(2)}_{g,i,j})^2\right]
	\le u_{g^2}(x)^2
	\right\}
	=
	P\left\{
	M^{(1)}_g \le u_{g^2}(x), \; M^{(2)}_g \le u_{g^2}(x)
	\right\}.
	\]
	
	Applying the union bound (Bonferroni inequality), we have:
	\[
	P\left\{
	M^{(1)}_g \le u_{g^2}(x), \; M^{(2)}_g \le u_{g^2}(x)
	\right\}
	\ge
	1 - P\left\{M^{(1)}_g > u_{g^2}(x) \right\}
	- P\left\{M^{(2)}_g > u_{g^2}(x) \right\}.
	\]
	
	Under the assumptions of Theorem~\ref{SSS_extreme_1st}, the distribution of \(M^{(k)}_g\) satisfies
	\[
	\lim_{g \to \infty} P\left\{M^{(k)}_g > u_{g^2}(x) \right\} = 2-2\exp(-\vartheta \exp(-x)) \quad (k=1,2).
	\]
	Substituting back into the union bound:
	\[
	\liminf_{g \to \infty} P\left\{
	\max_{i,j} \left[(\hat{T}^{(1)}_{g,i,j})^2 \vee (\hat{T}^{(2)}_{g,i,j})^2\right]
	\le u_{g^2}(x)^2
	\right\}
	\ge 1 - 2 \cdot (2-2\exp(-\vartheta \exp(-x))) = 4\exp(-\vartheta \exp(-x))-3.
	\]
	
	This completes the proof.
\end{proof}

Corollary~\ref{corollary-1st-max} provide a convenient framework for inference that is simultaneous over directions. The following Algorithm~\ref{alg:joint_slope} outlines a procedure for implementing the joint significance test for directional slopes based on the combined statistic in Corollary~\ref{corollary-1st-max}.

\begin{algorithm}[H]
	\caption{Directional Slope Joint Significance Procedure}
	\label{alg:joint_slope}
	\begin{algorithmic}[1]
		\State \textbf{Input:} Observed data matrix $Y = \{Y_{i,j}\}$ on a 2D spatial grid; smoothing bandwidth $h$; significance threshold $\tau = 2u_{g^2}(x)^2$ from Corollary~\ref{corollary-1st-max}.
		\State Perform local linear smoothing to estimate the directional slopes $\hat{a}_{10}(x)$ and $\hat{a}_{01}(x)$ at each location.
		\State Standardize the slope estimates to obtain directional test statistics $\tilde{T}^{(0)}(x)$ and $\tilde{T}^{(90)}(x)$, using weights from \eqref{W_1st} as defined in \eqref{standardize_1st}.
		\State Compute the joint statistic:
		\[
		R(x) = (\tilde{T}^{(0)}(x))^2 \vee (\tilde{T}^{(90)}(x))^2.
		\]
		\State Identify locations where $R(x) \geq \tau$ as jointly significant.
		\State \textbf{Output:} Set of locations with significant directional slope patterns.
	\end{algorithmic}
\end{algorithm}

\subsection{Significance of Curvature}
\label{sec-extreSSS}

This section provides a principled framework to detect significant curvature while controlling for multiple testing using extreme value theory. Inspired by curvature  hypothesis tests in \citep{godtliebsen2004statistical}, we focus on the distribution of quadratic forms of the Hessian matrix (representing directional second derivatives), which are derived from \eqref{argmin}
under the null hypothesis of ``no signal".

For $u^2+v^2=1$, the $(u,v)$ directional second derivative is
\begin{equation}
	\label{direct_2nd}
	(u,v)\left(
	\begin{matrix}
		\ha_{20}&\ha_{11}\\
		\ha_{11}&\ha_{02}
	\end{matrix}
	\right)(u,v)^T
	=\frac{1}{2h^4}\sum_{i=1}^n\sum_{j=1}^nY_{ij}\{[u(i-i_0)+v(j-j_0)]^2-h^2\}K_h(i-i_0,j-j_0).
\end{equation}
Similar to section \ref{sec-extreSSS_1st}, let
\begin{equation*}
	T_{l,k}=\sum_{r=1}^{n}\sum_{q=1}^{n}W^h_{r-l,q-k}Y_{r,q},
\end{equation*} 
where now
\begin{equation}
	\label{W_2nd}
	W^h_{r-l,q-k}=\{[u(r-l)+v(q-k)]^2-h^2\}K_{h}(r-l,q-k).
\end{equation}
Then we can calculate the limit of the autocorrelation function (details in \ref{Derive_2nd_corr})
\begin{equation}
	\label{eq_corr}
	\begin{aligned}
		\lim_{n \to \infty}Corr(T_{l,k},T_{l+i,k+j})=&\lim_{n \to \infty}\frac{\sum_{r=i}^{n}\sum_{q=j}^{n}W^h_{r,q}W^h_{r-i,q-j}}{\sum_{r=1}^{n}\sum_{q=1}^{n}(W^h_{r,q})^2}\\
		=&\frac{\int\int\{[u(x-i)+v(y-j)]^2-h^2\}K_{h}(x-i,y-j)((ux+vy)^2-h^2)K_{h}(x,y)dxdy}{\int\int((ux+vy)^2-h^2)^2K_{h}(x,y)^2dxdy}\\
		=&[1-\frac{1}{h^2}(iu+jv)^2+\frac{1}{12h^4}(iu+jv)^4]\exp(-\frac{i^2+j^2}{4h^2}).
	\end{aligned}
\end{equation}
We again now standardize $T_{l,k}$ as~\eqref{standardize_1st} with $W^h_{r-l,q-k}$ given by~\eqref{W_2nd}.
Again setting $1/h=C/\sqrt{\log g}$, we have 
\begin{theorem}
	\label{SSS_extreme}
	Consider a set of mean-0, variance-1 Gaussian random fields $\hat{T}_{g,i,j}$ . If for each fixed $g$ the random field $\{\hat{T}_{g,i,j}:1\leq i,j\leq g\}$  is stationary with correlation
	\begin{equation}
		\label{eq_corr_g_2nd}
		\rho_{g,i,j}=[1-\frac{C^2}{\log g}(iu+jv)^2+\frac{1}{12}\frac{C^4}{(\log g)^2}(iu+jv)^4]\exp(-\frac{(i^2+j^2)C^2}{4\log g}),
	\end{equation}
	where $C,u,v>0$ and $u^2+v^2=1$. Then
	\begin{equation*}
		\lim_{g\rightarrow\infty}P\left\{\max_{i,j=1,...,g}\hat{T}_{g,i,j}\leq u_{g^2}(x)\right\}=\exp(-\vartheta \exp(-x)),
	\end{equation*}
	where 
	\begin{equation}
		\label{eq-vartheta}
		\vartheta\leq2\Phi(\frac{\sqrt{6}C}{2})-1.
	\end{equation}
\end{theorem}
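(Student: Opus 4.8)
The plan is to verify the three technical conditions \eqref{2d_assp1.2}--\eqref{2d_2.6} of Theorem~\ref{2d_thm2.2} for the correlation structure \eqref{eq_corr_g_2nd} with $n=g$, exactly as in the proof of Theorem~\ref{SSS_extreme_1st}, and then identify the resulting limit $\vartheta$ and bound it. First I would compute the scale constants $\delta_{i,j}$. Writing $\rho_{g,i,j}$ as in \eqref{eq_corr_g_2nd} and using $\log g$ in place of $\log n$, a Taylor expansion of $1-\rho_{g,i,j}$ gives
\begin{equation*}
	\delta_{i,j}=\lim_{g\to\infty}(1-\rho_{g,i,j})\log g
	=\left(\frac{i^2+j^2}{4}+(iu+jv)^2\right)C^2,
\end{equation*}
since the quartic term and all cross terms contribute at order $(\log g)^{-1}$ and vanish in the limit. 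This is strictly positive for $(i,j)\neq(0,0)$ (note $C,u,v>0$), so \eqref{2d_assp1.2} holds with finite, positive $\delta_{i,j}$.

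Next I would choose the block size $l_g$. Because $\rho_{g,i,j}$ decays like $\exp(-(i^2+j^2)C^2/(4\log g))$, any sequence with $l_g/g\to 0$ but $l_g/\sqrt{\log g}\to\infty$ — for instance $l_g=\lfloor (\log g)^{3/4}\rfloor$ — will force $\sup_{\sqrt{i^2+j^2}\ge l_g}|\rho_{g,i,j}|\log g\to 0$, giving \eqref{2d_2.1} and \eqref{2d_2.5}. For the summability condition \eqref{2d_2.6}, I would bound the summand by observing that for $(i,j)$ with $\sqrt{i^2+j^2}\le l_g$ the exponent $2(1-\rho_{g,i,j})/(1+\rho_{g,i,j})$ behaves like $\delta_{i,j}/\log g$ up to constants, so $g^{-2(1-\rho_{g,i,j})/(1+\rho_{g,i,j})}$ decays like $g^{-c(i^2+j^2)/\log g}=\exp(-c(i^2+j^2))$ for an absolute constant $c>0$; the logarithmic prefactor is subpolynomial and does not affect convergence. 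Hence the tail sum over $\{0,\dots,l_g\}^2\setminus\{0,\dots,m\}^2$ is dominated by a convergent Gaussian-type series whose tail vanishes as $m\to\infty$, establishing \eqref{2d_2.6}. These are the same estimates as in the slope case; the only new feature is the extra quartic term in $\rho_{g,i,j}$, which is lower order and causes no difficulty.

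With all conditions verified, Theorem~\ref{2d_thm2.2} yields the stated limit with
\begin{equation*}
	\vartheta=P\!\left(E/4+\sqrt{\tfrac{1}{2}\delta_{i,j}}\,W_{i,j}\le\delta_{i,j}\ \text{for all }(i,j)\in(\mathbb{N}\times\{0\})\cup(\mathbb{Z}\times\mathbb{N})\right).
\end{equation*}
To get the explicit bound \eqref{eq-vartheta}, I would restrict the event defining $\vartheta$ to the single index $(i,j)=(1,0)$ (or equivalently $(0,1)$), which only enlarges the probability, so $\vartheta\le P(E/4+\sqrt{\delta_{1,0}/2}\,W_{1,0}\le\delta_{1,0})$. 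Here $\delta_{1,0}=(\tfrac14+u^2)C^2$, and since $E\ge 0$ we have $\vartheta\le P(\sqrt{\delta_{1,0}/2}\,W_{1,0}\le\delta_{1,0})=P(W_{1,0}\le\sqrt{2\delta_{1,0}})=\Phi(\sqrt{2\delta_{1,0}})$. Using $u\le 1$ gives a worst case; but to match the claimed constant $\sqrt6 C/2$ one evaluates at $u=v=1/\sqrt2$, where $\delta_{1,0}=(\tfrac14+\tfrac12)C^2=\tfrac34 C^2$, so $\sqrt{2\delta_{1,0}}=\sqrt{3/2}\,C=\sqrt6 C/2$, giving $\vartheta\le\Phi(\sqrt6 C/2)$; a two-sided refinement using both $(1,0)$ and $(-1,0)$ (whose $W$'s are not perfectly correlated) sharpens this to $2\Phi(\sqrt6 C/2)-1$, exactly as in the slope case where the analogous computation with $\delta_{1,0}=\tfrac14 C^2+\,$(slope term) produced $2\Phi(C)-1$.

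The main obstacle I anticipate is not the conditions themselves — those are routine Gaussian-tail and Taylor-expansion estimates essentially identical to Theorem~\ref{SSS_extreme_1st} — but rather getting the bound on $\vartheta$ to come out with the precise constant $\sqrt6 C/2$: one must be careful about which finite subset of indices is used to over-bound the defining event, how the independent exponential term $E$ is discarded, and how the correlation \eqref{2d_2.4} among the relevant $W_{i,j}$ enters when one keeps two indices to obtain the $2\Phi(\cdot)-1$ form rather than the cruder one-sided $\Phi(\cdot)$. I would therefore mirror the bookkeeping in the proof of Theorem~\ref{SSS_extreme_1st} verbatim, substituting the curvature weights \eqref{W_2nd} and the curvature correlation \eqref{eq_corr_g_2nd}, and simply track how the quadratic-form normalization changes $\delta_{1,0}$ from the slope value to $\tfrac34 C^2$.
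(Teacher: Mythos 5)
Your overall strategy coincides with the paper's: compute $\delta_{i,j}=[(iu+jv)^2+\tfrac{i^2+j^2}{4}]C^2$, verify conditions \eqref{2d_2.1}, \eqref{2d_2.5}, \eqref{2d_2.6} with a block size $l_g$ that is $o(g)$ but grows faster than $\sqrt{\log g}$, and then bound $\vartheta$ by restricting the defining event to a single lag. The condition checks are sketched at a level that would need the paper's two-regime analysis (correlations bounded away from $1$ versus the Taylor-expansion regime near the origin) to be made rigorous, but the ideas are the right ones and the quartic term is indeed harmless.

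The genuine gap is in the final bound on $\vartheta$. First, your mechanism for producing the $2\Phi(\cdot)-1$ form is wrong: discarding $E$ gives only $\vartheta\le\Phi(\sqrt{2\delta_{1,0}})$, which is strictly weaker than the claimed bound (for small $C$ it tends to $1/2$ while $2\Phi(\sqrt6 C/2)-1$ tends to $0$), and your proposed "two-sided refinement using $(1,0)$ and $(-1,0)$" cannot work because $(-1,0)$ does not belong to the index set $(\mathbb{N}\times\{0\})\cup(\mathbb{Z}\times\mathbb{N})$ over which $\vartheta$ is defined. The correct route, as in the paper, is to \emph{keep} the exponential variable and compute $P\bigl(E/2+\sqrt{2\delta_{1,0}}\,W_{1,0}\le 2\delta_{1,0}\bigr)$ exactly by conditioning on $W_{1,0}$ and completing the square; this single-lag probability already equals $2\Phi(\sqrt{2\delta_{1,0}})-1$. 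Second, your treatment of the dependence on $(u,v)$ does not establish the bound uniformly: using only the lag $(1,0)$ gives $\delta_{1,0}=(u^2+\tfrac14)C^2$, which for $u^2$ close to $1$ yields $2\Phi(\sqrt{2\delta_{1,0}})-1>2\Phi(\sqrt6 C/2)-1$, so that bound alone does not imply \eqref{eq-vartheta}. Evaluating at the symmetric point $u=v=1/\sqrt2$ identifies the right constant but proves nothing for other directions. You must take the minimum of the bounds obtained from the two lags $(1,0)$ and $(0,1)$, i.e. $\vartheta\le 2\Phi\bigl(\sqrt{2(\min(u^2,v^2)+\tfrac14)}\,C\bigr)-1$, and then use $\min(u^2,v^2)\le\tfrac12$ to conclude.
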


The proof is in Appendix \ref{proof_2nd_thm}.

In contrast to the slope-based test, which evaluates directional derivatives at a specific angle, the curvature-based test assesses the second derivative structure across all directions to determine the significance of curvature.
Since curvature is inherently directional, we consider the projected Hessian along unit vectors $(u, v) = (\cos\theta, \sin\theta)$, where $\theta$ denotes an orientation angle. Evaluating significance along each such direction allows us to identify features such as ridges, valleys, and peaks with directional specificity.
In practice, we approximate the continuous set of directions by a finite collection of angles, such as $\Theta = \{0, \pi/12, \pi/6, \ldots, 11\pi/12\}$. Note that this selection covers all unique directions due to the symmetry property of \eqref{eq_corr}: the autocorrelation in direction $\theta$ is equivalent to that in direction $\theta + \pi$, and the correlation structure \eqref{eq_corr_g_2nd} in Theorem~\ref{SSS_extreme} remains unchanged under this transformation.

For example, when identifying peaks, both eigenvalues must be significantly negative, ensuring that the Hessian matrix is negative definite. This condition is formally expressed as
\begin{equation*}
	\label{eq_direct_curve}
	(u,v) \begin{pmatrix}
		\ha_{20} & \ha_{11} \\
		\ha_{11} & \ha_{02}
	\end{pmatrix} (u,v)^T < 0
\end{equation*}  
for all unit vectors \((u,v)\). This guarantees that the local curvature exhibits a concave structure, pointing towards the presence of a peak. 
Furthermore, we can also investigate the statistical significance of other types of curvature based on the criteria in Table~\ref{Table_curve_standard}. 
\begin{table}[htbp]
	\centering
	\caption{Criteria used to classify local curvature types based on statistical significance of the directional second derivatives from  \eqref{direct_2nd}. The last column provides the corresponding color of each curvature type.}
	\label{Table_curve_standard}
	\begin{tabular}{|c|c|c|}
		\hline
		Curvature Type &Criteria &Color    \\
		\hline
		Peak&Significantly negative for all directions& Blue\\
		\hline
		Hole&Significantly positive for all directions& Yellow\\
		\hline
		Saddle point&Both significantly positive and negative appear& Red  \\
		\hline
		Ridge&Some significantly negative and rest insignificant& Purple\\
		\hline
		Valley&Some significantly positive and rest insignificant& Orange\\
		\hline
	\end{tabular}
\end{table}

Similar to Corollary~\ref{corollary-1st-max}, we propose the following Corollary~\ref{corollary-2nd-max} to combine the significance from different directions in $\Theta$. Moreover, we can use the following Algorithm~\ref{alg:directional_curvature}  to identify regions with statistically significant curvature structure.

\begin{corollary}
	\label{corollary-2nd-max}
	Consider multiple mean-zero, variance-one Gaussian random fields  
	\(\hat{T}^{(k)}_{g,i,j}\), $k=1,2,...,N$, each indexed by \(1 \le i,j \le g\) and satisfying
	assumption~\eqref{eq_corr_g_2nd}. Let \(u_{g^2}(x)\) be the threshold corresponding to marginal tail probability \(\alpha/2\). Then for every fixed \(x \in \mathbb{R}\),
	\[
	\liminf_{g \to \infty}
	P\left\{
	\max_{i,j = 1,\dots, g}
	\left[
	\vee_{k=1}^N (\hat{T}^{(k)}_{g,i,j})^2
	\right]
	\le u_{g^2}(x)^2
	\right\}
	\ge 2N\exp(-\vartheta \exp(-x))-(2N-1),
	\]
	where the right-hand side reflects Bonferroni control of the family-wise error rate over the two directional tests.
\end{corollary}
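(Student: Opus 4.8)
The plan is to mimic the proof of Corollary~\ref{corollary-1st-max}, replacing the two--fold union bound by an $N$--fold Bonferroni bound. First, for each $k=1,\dots,N$ set $M^{(k)}_g=\max_{i,j=1,\dots,g}|\hat{T}^{(k)}_{g,i,j}|$, and note that for a fixed $k$ one has $(\hat{T}^{(k)}_{g,i,j})^2\le u_{g^2}(x)^2$ for all $i,j$ if and only if $M^{(k)}_g\le u_{g^2}(x)$. Hence the event whose probability we must bound is exactly $\bigcap_{k=1}^N\{M^{(k)}_g\le u_{g^2}(x)\}$, and the complementary union bound gives
\[
P\Big(\bigcap_{k=1}^N\{M^{(k)}_g\le u_{g^2}(x)\}\Big)\ \ge\ 1-\sum_{k=1}^N P\big(M^{(k)}_g>u_{g^2}(x)\big).
\]

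Second, I would control each term $P(M^{(k)}_g>u_{g^2}(x))$ by passing to one--sided maxima. Write $M^{(k),+}_g=\max_{i,j}\hat{T}^{(k)}_{g,i,j}$ and $M^{(k),-}_g=\max_{i,j}(-\hat{T}^{(k)}_{g,i,j})$, so that $\{M^{(k)}_g>u\}=\{M^{(k),+}_g>u\}\cup\{M^{(k),-}_g>u\}$ and therefore $P(M^{(k)}_g>u)\le P(M^{(k),+}_g>u)+P(M^{(k),-}_g>u)$. Since $\{\hat{T}^{(k)}_{g,i,j}\}$ is a mean--zero Gaussian field, $\{-\hat{T}^{(k)}_{g,i,j}\}$ has the same law and in particular the same correlation structure \eqref{eq_corr_g_2nd}; applying Theorem~\ref{SSS_extreme} to each of these fields yields $\lim_{g\to\infty}P(M^{(k),\pm}_g>u_{g^2}(x))=1-\exp(-\vartheta\exp(-x))$, with the common $\vartheta$ bounded as in \eqref{eq-vartheta} whenever the fields share the constant $C$. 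Consequently $\limsup_{g\to\infty}P(M^{(k)}_g>u_{g^2}(x))\le 2-2\exp(-\vartheta\exp(-x))$ for every $k$.

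Third, combine the two steps. Taking $\liminf$ in the Bonferroni inequality and inserting the bound just obtained,
\[
\liminf_{g\to\infty}P\Big(\bigcap_{k=1}^N\{M^{(k)}_g\le u_{g^2}(x)\}\Big)\ \ge\ 1-\sum_{k=1}^N\limsup_{g\to\infty}P\big(M^{(k)}_g>u_{g^2}(x)\big)\ \ge\ 1-N\big(2-2\exp(-\vartheta\exp(-x))\big),
\]
and the right--hand side equals $2N\exp(-\vartheta\exp(-x))-(2N-1)$, which is the asserted bound.

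The only genuinely delicate point is the second step: that the two--sided maximum $M^{(k)}_g$ obeys the stated asymptotics. This requires (i) that negating a centered stationary Gaussian field preserves its correlation structure so that Theorem~\ref{SSS_extreme} applies verbatim to $-\hat{T}^{(k)}$, and (ii) the splitting of the absolute--value maximum, which for the $\liminf$ bound only needs the union bound used above; in fact the exact limit $2-2\exp(-\vartheta\exp(-x))$ holds once one invokes the standard fact that the positive-- and negative--exceedance maxima of a stationary Gaussian field are asymptotically independent, exactly as is used implicitly in Corollary~\ref{corollary-1st-max}. Everything else is routine $N$--term bookkeeping. If the $N$ fields carry distinct constants $C_k$ (hence distinct bounds $\vartheta_k$), the same argument goes through with each $\vartheta$ kept inside the sum, or, more crudely, with $\vartheta$ replaced by $\max_k\vartheta_k$ in the final display.
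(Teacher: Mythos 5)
Your proposal is correct and follows essentially the same route as the paper, whose proof of Corollary~\ref{corollary-2nd-max} is simply the $N$-fold Bonferroni argument carried over from Corollary~\ref{corollary-1st-max}, with each marginal exceedance probability controlled by Theorem~\ref{SSS_extreme}. In fact your second step is slightly more careful than the paper's template: you correctly observe that the one-sided union bound $P(M^{(k)}_g>u)\le P(M^{(k),+}_g>u)+P(M^{(k),-}_g>u)$ already yields the needed $\limsup$ bound $2-2\exp(-\vartheta\exp(-x))$, whereas the paper asserts this value as an exact limit for the two-sided maximum without justifying the asymptotic independence that an equality would require.
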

\begin{proof}
	Similar to the proof of Corollary~\ref{corollary-1st-max}, Corollary~\ref{corollary-2nd-max} can be proved by using Bonferroni correction. 
\end{proof}

\begin{algorithm}[H]
	\caption{Curvature Structure Investigation Procedure}
	\label{alg:directional_curvature}
	\begin{algorithmic}[1]
		\State \textbf{Input:} Observed data matrix $Y = \{Y_{i,j}\}$ on a 2D spatial grid; smoothing bandwidth $h$; a finite set of angles $\Theta = \{\theta_1, \ldots, \theta_K\}$; significance threshold $u(x)$ from Corollary~\ref{corollary-2nd-max}.
		\State Perform local quadratic smoothing to obtain $\ha_{20}(x),\ha_{11}(x),\ha_{02}(x)$.
		\For{each angle $\theta_k \in \Theta$}
		\State Compute the directional curvature statistic for $(u_k, v_k) = (\cos \theta_k, \sin \theta_k)$:
		\[
		Q_k(x) = (u_k, v_k)
		\begin{pmatrix}
			\ha_{20}(x) & \ha_{11}(x) \\
			\ha_{11}(x) & \ha_{02}(x)
		\end{pmatrix}
		(u_k, v_k)^T.
		\]
		\State Standardize $Q_k(x)$ to $\tilde{T}_k(x)$ as in \eqref{standardize_1st} with weights given by $\eqref{W_2nd}$.   
		\State  Flag locations where $\tilde{T}_k(x) \geq u(x)$ or $\tilde{T}_k(x) \leq -u(x)$ as significant in direction $\theta_k$.
		\EndFor
		\State Aggregate flagged locations across all directions to produce the final curvature significance map based on Table \ref{Table_curve_standard}.
		\State \textbf{Output:} Set of locations exhibiting significant curvatures.
	\end{algorithmic}
\end{algorithm}

\section{Simulation}
\label{sec_simulation}
In this section, we investigate the empirical performance of the proposed scale-space significance testing procedures through controlled simulations. Our goal is to validate the accuracy of the theoretical thresholds derived from the extreme value distribution theory, assess the Type I error control under the global null hypothesis, and compare the detection power against established methods. 
The simulation study is divided into two parts: an evaluation of Type I error rates under the null hypothesis in Section~\ref{subsec-type1} and a qualitative power analysis through curvature detection in synthetic data in Section~\ref{subsec-power}.

\subsection{Type I Error}
\label{subsec-type1}
We evaluate the empirical performance of our proposed significance testing procedures for both first and second derivatives under the global null hypothesis $H_0: m({\bf x})=0$ for all ${\bf x}$. Specifically, we generate realizations of i.i.d.  Gaussian random variables and record the number of times the test statistics exceed the theoretical thresholds derived from the limiting extreme value distributions in Theorems~\ref{SSS_extreme_1st} and~\ref{SSS_extreme}, and Corollary~\ref{corollary-1st-max} and~\ref{corollary-2nd-max} .

To assess Type I error control, we simulate 1000 independent replicates of mean-0, variance-1 Gaussian random fields on a $280 \times 280$ spatial grid with $\Delta = 1$. To avoid boundary effects in the smoothing method, we evaluate the estimates on the  $g\times g$ central grid with $g=200$.  For each replicate, we vary the smoothing bandwidth $h \in \{2, 4, 8, 16\}$ and evaluate the corresponding test statistics along a set of directional angles and overall. The significance threshold is set at $\alpha = 0.05$. Therefore, conservative control of the error rate follows when  the proportion of exceedances stays below the nominal Type I error rate 5\% (i.e. no more than 50 out of 1000).

For the first derivative (slope) test, 
we restrict the set of tested directions to the angles \[\theta =  0, \pi/2,\] because they provide an upper bound on the magnitude of the gradient.
The number of exceedances of the theoretical threshold of Theorem~\ref{SSS_extreme_1st} for each configuration is shown in Table~\ref{sim_1st}. All observed exceedance counts are below 50, confirming that the proposed gradient-based procedure successfully controls the directional Type I error at the desired level across all tested bandwidths and directions. Table~\ref{sim_1st_corr} also provides the numbers of exceedances of Corollary~\ref{corollary-1st-max} and classical SSS, respectively. The advanced SSS counts remain conservative, i.e. all below 50 as well.

\begin{table}[htbp]
	\centering
	\caption{Number of exceedances (out of 1000 simulations)  for the first derivative (slope) significance test under the null hypothesis. Shows conservative performance. }
	\label{sim_1st}
	\begin{tabular}{|c|c|c|c|c|}
		\hline
		\multirow{2}{*}{Angle} &\multicolumn{4}{c|}{Bandwidth}    \\
		\cline{2-5}
		&2&4&8&16    \\
		\hline
		0&35&12&10&5\\
		\hline
		$\pi/2$&31&24&11&6\\
		\hline
	\end{tabular}
\end{table}

\begin{table}[htbp]
	\centering
	\caption{Pure noise experiment comparing slope implementations of advanced and classical SSS. For each bandwidth $h$, the table reports the number of replications (out of 1000) with at least one exceedance. Shows statistical validity of advanced SSS. }
	\label{sim_1st_corr}
	\begin{tabular}{|c|c|c|c|c|}
		\hline
		\multirow{2}{*}{Method} &\multicolumn{4}{c|}{Bandwidth}    \\
		\cline{2-5}
		&2&4&8&16    \\
		\hline
		Advanced SSS&34&26&16&6\\
		\hline
		Classical SSS&336&303&288&234\\
		\hline
	\end{tabular}
\end{table}

Recall from the discussion after Theorem~\ref{SSS_extreme}, for the second derivative (curvature) test, it is enough to only consider angles in to  symmetry in $(-\pi/2, \pi/2]$. Furthermore, under the null distribution of i.i.d. Gaussians, the distribution of  the test statistic in \eqref{direct_2nd} for an angle $\theta$ determined by $(u,v)$ is the same as the angle determined by $(v,u)$. This allows us to focus on angles in $(-\pi/4, \pi/4]$. In Table~\ref{sim_2nd}, we  report results for angles 
\[
\theta \in \{-\pi/6, -\pi/12, 0, \pi/12, \pi/6, \pi/4\}.
\]
Again, the number of exceedances of the theoretical threshold of Theorem~\ref{SSS_extreme} in each configuration remains below the nominal level, providing strong empirical evidence that the curvature-based significance test also maintains conservative Type I error control.

\begin{table}[htbp]
	\centering
	\caption{Second derivative (curvature)  exceedances (out of 1000 simulations)  under the null hypothesis. Again shows conservative performance. }
	\label{sim_2nd}
	\begin{tabular}{|c|c|c|c|c|}
		\hline
		\multirow{2}{*}{Angle} &\multicolumn{4}{c|}{Bandwidth}    \\
		\cline{2-5}
		&2&4&8&16    \\
		\hline
		$-\pi/6$&35&23&16&8\\
		\hline
		$-\pi/12$&33&20&14&7\\
		\hline
		0&35&12&11&3\\
		\hline
		$\pi/12$&30&16&12&4\\
		\hline
		$\pi/6$&37&19&7&3\\
		\hline
		$\pi/4$&32&17&9&5\\
		\hline
	\end{tabular}
\end{table}

\begin{table}[htbp]
	\centering
	\caption{Pure noise experiment comparing advanced and classical SSS for curvature. For each bandwidth, the  number of replications (out of 1000) with at least one exceedance is reported. Unlike the classical SSS, the advanced SSS maintains type I error control.}
	\label{sim_2nd_corr}
	\begin{tabular}{|c|c|c|c|c|}
		\hline
		\multirow{2}{*}{Method} &\multicolumn{4}{c|}{Bandwidth}    \\
		\cline{2-5}
		&2&4&8&16    \\
		\hline
		Advanced SSS&22&17&11&5\\
		\hline
		Classical SSS&229&298&298&265\\
		\hline
	\end{tabular}
\end{table}

The advanced SSS test in Algorithm~\ref{alg:directional_curvature} evaluates local  curvature structure over the angles $\Theta = \{0, \pi/6, \pi/3, \pi/2, 2\pi/3, 5\pi/6\}$ simultaneously.   Table~\ref{sim_2nd_corr} reports simulation results for the above setup for both the advanced and classical SSS. Shown is the number of simulations with at least one detection of significant curvature. In the advanced SSS, the proportions exceeding the theoretical threshold from Corollary~\ref{corollary-2nd-max} are all below 5\%, whereas the classical SSS fails to control the type I error rate.    

\subsection{Power Analysis}
\label{subsec-power}
To evaluate the detection power of advanced SSS, we perform a visual comparison with the classical SSS from \citet{godtliebsen2004statistical}. The left panel of Figure~\ref{fig:curvature_comparison} is a dataset based on the Peaks and Valleys example with relatively low additive noise from Figures 7 and 8 of \citet{godtliebsen2004statistical}. We examine both slope and curvature classification results across four levels of smoothing bandwidth $h\in\{2,4,8,16\}$.

For slope detection, Algorithm~\ref{alg:joint_slope}, the top two rows of Figure~\ref{fig:curvature_comparison} use the streamline visualization  developed in \citet{godtliebsen2004statistical} to convey the significant slope information. The green streamlines are curves following the gradient while it is statistically significant. 
The first row shows the results of our advanced SSS, while the bottom shows the original SSS. As expected, from the conservative nature of the advanced SSS, its streamlines are somewhat fewer and shorter in length. Since all the main features are present in both analyses, power loss is seen to be minimal. 

For curvature classification, Algorithm~\ref{alg:directional_curvature}, we focus on the method's ability to identify key curvature structures such as peaks, holes, ridges, valleys, and saddle points. The third and fourth rows of Figure~\ref{fig:curvature_comparison} show the results of the advanced SSS and the original SSS, respectively.  
As expected, there are fewer significant pixels in the advanced SSS due to its proper statistical size. However, all major features are present, again showing good statistical power. 

More simulated data examples, contrasting  the advanced and original SSS, can be found in Figure~\ref{SM_fig:curvature_comparison_1} - \ref{SM_fig:curvature_comparison_4} in the supplementary material. Similar conclusions follow.

\begin{figure}[htbp]
	\center{\includegraphics[width=18cm]  {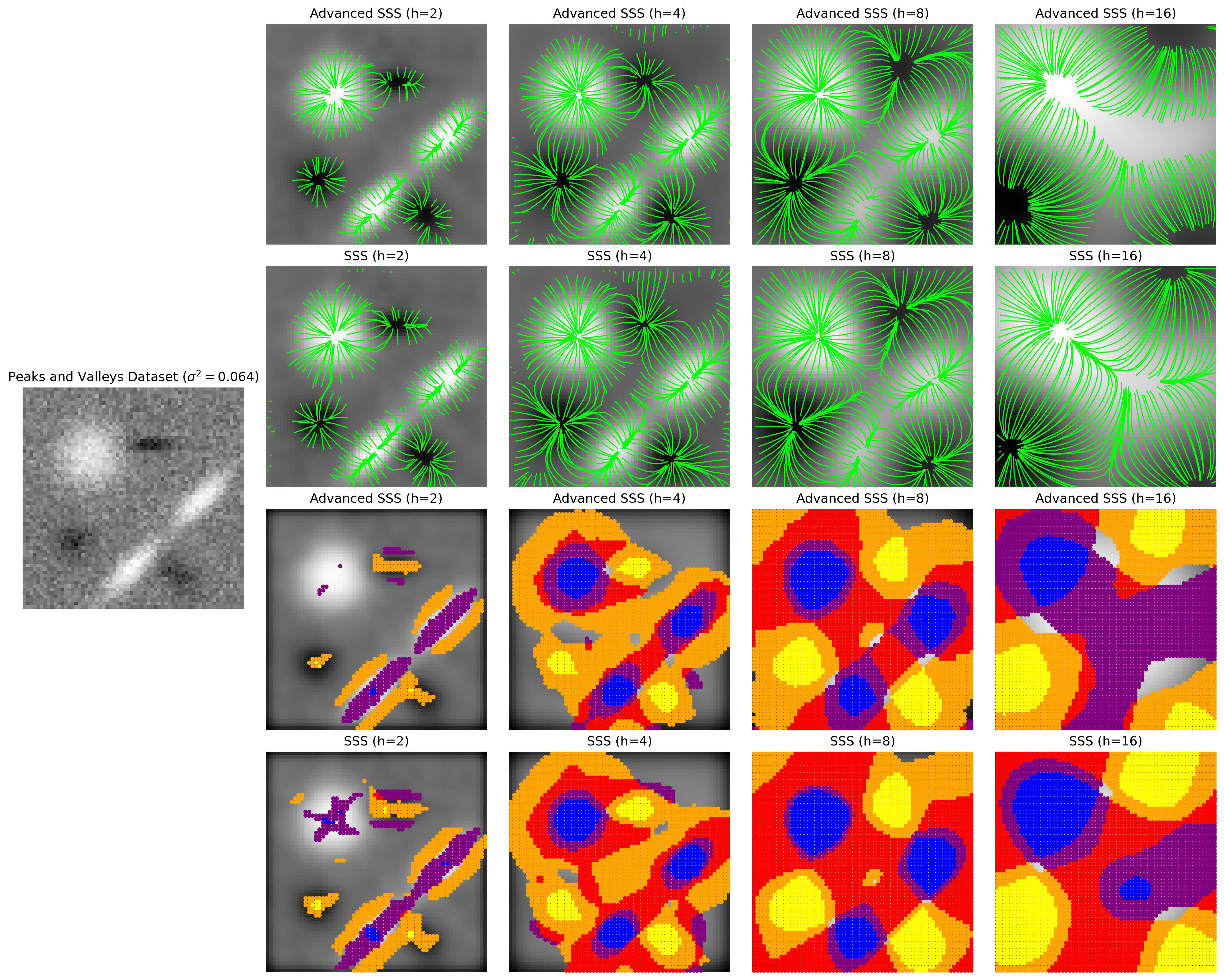}}
	\caption{
		Comparison of slope and curvature identification results from the proposed Advanced SSS method (rows 1 and 3) and the original SSS method (rows 2 and 4) across four bandwidths \( h = 2, 4, 8, 16 \). The Peaks and Valleys dataset with random noise added is shown in the leftmost panel. In the top rows, the green streamlines follow statistically significant gradients. In the bottom rows, each colored dot represents a statistically significant curvature classification: blue for peaks, yellow for holes, purple and orange for ridge and valley structures, and red for saddle points. These demonstrate comparable statistical power.
	}
	
	\label{fig:curvature_comparison}
\end{figure}

\section{Gamma Camera Analysis}
\label{sec_realdata}
As an additional comparison of the advanced SSS with the classical, we analyze a  Gamma camera phantom image from \citet{godtliebsen2004statistical}. The image records photon-count intensities on a rectangular detector array; bright uptake along rib-like structures indicates cancerous regions. We work with the \(80\times 80\) subimage extracted from a \(256\times 256\) frame (the yellow box in their Fig.~1) shown in the left panel of Figure~\ref{fig:gamma}. We examine Gaussian scale space smooths across \(h\in\{1,2,4,8\}\) (their Fig.~2). 
Rows 1 (Algorithm~\ref{alg:joint_slope}) and 3 (Algorithm~\ref{alg:directional_curvature})  of Figure~\ref{fig:gamma} show advanced SSS analyses with multiplicity control  across space and scale, with corresponding classical SSS maps shown in rows 2  and 4.

\begin{figure}[htbp]
	\center{\includegraphics[width=18cm]  {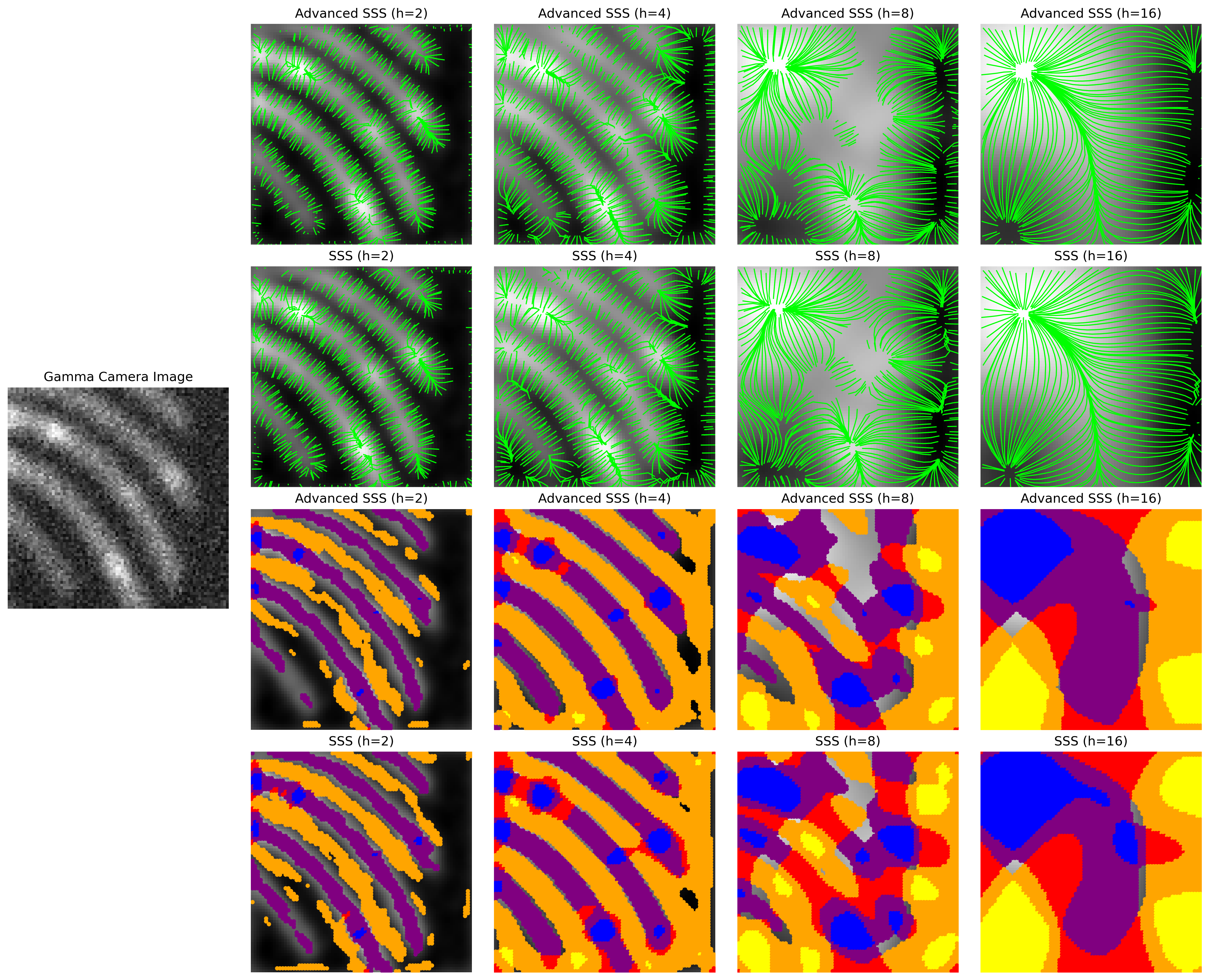}}
	\caption{Comparison of slope and curvature identification results from the proposed Advanced SSS method (rows 1 and 3) and the original SSS method (rows 2 and 4) across four bandwidths \( h = 2, 4, 8, 16 \). The Gamma camera image  is shown in the leftmost panel. In the top rows, the green streamlines follow statistically significant gradients. In the bottom rows, each colored dot represents a statistically significant curvature classification: blue for peaks, yellow for holes, purple and orange for ridge and valley structures, and red for saddle points. These demonstrate comparable statistical power.
	}
	
	\label{fig:gamma}
\end{figure} 

As noted in \cite{godtliebsen2004statistical}, the most relevant medical structure appears at the scale $h=4$. Ribs appear as purple ridges with orange valleys between in the curvature maps in the bottom rows. Potential cancer regions appear as blue peaks sometimes surrounded by red saddle points. The ribs are highlighted in the streamline analyses in the top rows by green lines indicating  significant gradients. The bright cancer regions are indicated by streamlines coming up the side of the rib and then bending along the ridge toward the bright white spot.  The correct multiple comparisons done by the advanced SSS results in fewer  and shorter streamlines and a somewhat smaller number of significant curvature pixels. However, all important global structures are discovered by both methods.

\section{Discussion}
\label{sec_discussion}
In this paper, we proposed an advanced distribution theory of Significance in Scale Space (SSS) \citet{godtliebsen2002significance_cluster,godtliebsen2002significance,godtliebsen2004statistical}. This advanced SSS is based on a field-wise simultaneous statistical inference framework to discover both slope and curvature structures in a 2-d image. In pure noise images  with independent standard normal entries, the advanced SSS correctly controlled type I error across bandwidths, while the classical SSS reported too many false positive features, which demonstrated invalid family-wise error control. On images with real structure, our procedure indicated salient peaks, holes and other curvature features. Our method evaluates the significance of slope or curvature at a single direction, then aggregates across multiple directions to produce a coherent curvature summary for each pixel. This two stage design controls error at the space and scale level for a fixed angle and then resolves local geometry by combining angle wise evidence into five different curvature types.

\begin{appendices}
	\section{Proofs}
	\subsection{Derivation of \eqref{eq_corr_1st}}
	\label{Derive_1st_corr}
	For denominator of Equation \ref{eq_corr_1st}, since 
	\begin{equation*}
		\begin{aligned}
			&\int\int x^2K_{h}(x,y)^2dxdy=\int\int x^2\frac{1}{4\pi^2h^4}\exp(-\frac{x^2+y^2}{h^2})dxdy\\
			=&\frac{1}{4\pi h^2}\int x^2\frac{1}{\sqrt{\pi h^2}}\exp(-\frac{x^2}{h^2})dx\int \frac{1}{\sqrt{\pi h^2}}\exp(-\frac{y^2}{h^2})dy\\
			=&\frac{1}{4\pi h^2}\frac{h^2}{2}\\
			=&\frac{1}{8\pi},
		\end{aligned}
	\end{equation*}
	we have 
	\begin{equation*}
		\begin{aligned}
			\int\int(ux+vy)^2K_{h}(x,y)^2dxdy
			=\int\int(u^2x^2+2uvxy+v^2y^2)K_{h}(x,y)^2dxdy
			=\frac{1}{8\pi}.
		\end{aligned}
	\end{equation*}
	
	For numerator of Equation \ref{eq_corr_1st}, we have 
	\begin{equation*}
		\begin{aligned}
			&\int\int[u(x-i)+v(y-j)]K_{h}(x-i,y-j)(ux+vy)K_{h}(x,y)dxdy\\
			=&\int\int[u^2x^2+2uvxy+v^2y^2-(iu+jv)(ux+vy)]K_{h}(x-i,y-j)K_{h}(x,y)dxdy\\
			=&\int\int[u^2x^2+2uvxy+v^2y^2-(iu+jv)(ux+vy)]\exp(-\frac{1}{2h^2}[(x-i)^2+x^2+(y-j)^2+y^2])dxdy.
		\end{aligned}
	\end{equation*}
	
	In order to calculate the simplified expression of this numerator, we let
	\begin{equation}
		\label{eq_calc_F}
		\begin{aligned}
			F_{i}(k)=\frac{1}{2\pi h^2}\int_{-\infty}^{\infty}x^k\exp(-\frac{1}{2h^2}[(x-i)^2+x^2])dx.
		\end{aligned}
	\end{equation}
	
	Then we calculate the above function for $k=0,1,2$.
	\begin{equation*}
		\begin{aligned}
			F_{i}(0)&=\frac{1}{2\pi h^2}\int_{-\infty}^{\infty}\exp(-\frac{1}{2h^2}[(x-i)^2+x^2])dx\\
			&=\frac{1}{2\sqrt{\pi h^2}}\exp(-\frac{i^2}{4h^2})\int_{-\infty}^{\infty}\frac{1}{\sqrt{\pi h^2}}\exp(-\frac{1}{h^2}[(x-i/2)^2])dx\\
			&=\frac{1}{2\sqrt{\pi h^2}}\exp(-\frac{i^2}{4h^2}),
		\end{aligned}
	\end{equation*}
	\begin{equation*}
		\begin{aligned}
			F_{i}(1)&=\frac{1}{2\pi h^2}\int_{-\infty}^{\infty}x\exp(-\frac{1}{2h^2}[(x-i)^2+x^2])dx\\
			&=\frac{1}{2\sqrt{\pi h^2}}\exp(-\frac{i^2}{4h^2})\int_{-\infty}^{\infty}x\frac{1}{\sqrt{\pi h^2}}\exp(-\frac{1}{h^2}[(x-i/2)^2])dx\\
			&=\frac{i}{2}F_{i}(0),
		\end{aligned}
	\end{equation*}
	\begin{equation*}
		\begin{aligned}
			F_{i}(2)&=\frac{1}{2\pi h^2}\int_{-\infty}^{\infty}x^2\exp(-\frac{1}{2h^2}[(x-i)^2+x^2])dx\\
			&=\frac{1}{2\sqrt{\pi h^2}}\exp(-\frac{i^2}{4h^2})\int_{-\infty}^{\infty}x^2\frac{1}{\sqrt{\pi h^2}}\exp(-\frac{1}{h^2}[(x-i/2)^2])dx\\
			&=(\frac{h^2}{2}+\frac{i^2}{4})F_{i}(0).
		\end{aligned}
	\end{equation*}
	
	Therefore, we have numerator of Equation \eqref{eq_corr_1st} as
	\begin{equation*}
		\begin{aligned}
			&\int\int[u^2x^2+2uvxy+v^2y^2-(iu+jv)(ux+vy)]\exp(-\frac{1}{2h^2}[(x-i)^2+x^2+(y-j)^2+y^2])dxdy\\
			=&u^2F_{i}(2)F_{j}(0)+2uvF_{i}(1)F_{j}(1)+v^2F_{i}(0)F_{j}(2)-(iu+jv)[uF_{i}(1)F_{j}(0)+vF_{i}(0)F_{j}(1)]\\
			=&F_{i}(0)F_{j}(0)[\frac{h^2}{2}+\frac{(iu+jv)^2}{4}-\frac{(iu+jv)^2}{2}]\\
			=&\frac{1}{4\pi h^2}\exp(-\frac{i^2+j^2}{4h^2})(\frac{h^2}{2}-\frac{(iu+jv)^2}{4}),
		\end{aligned}
	\end{equation*}
	and 
	\begin{equation*}
		\begin{aligned}
			&Corr(T_{l,k},T_{l+i,k+j})=(1-\frac{1}{2h^2}(iu+jv)^2)\exp(-\frac{(i^2+j^2)}{4h^2}).
		\end{aligned}
	\end{equation*}
	\subsection{Proof of Theorem \ref{SSS_extreme_1st}}
	\label{proof_1st_thm}
	\begin{proof}
		by using Taylor expansion of the exponential term in \eqref{eq_corr_g_1st}, we get
		\begin{equation*}
			\label{eq_rho_taylor_1st}
			\begin{aligned}
				\rho_{g,i,j}&= [1-\frac{C^2}{2\log g}(iu+jv)^2]\exp(-\frac{(i^2+j^2)C^2}{4\log g})\\&=[1-\frac{C^2}{2\log g}(iu+jv)^2][1-\frac{(i^2+j^2)C^2}{4\log g}+O((\log g)^{-2})]\\
				&=1-[\frac{(iu+jv)^2}{2}+\frac{i^2+j^2}{4}]\frac{C^2}{\log g}+O((\log g)^{-2}).
			\end{aligned}
		\end{equation*}
		Therefore, we have
		\begin{equation*}
			\begin{aligned}
				\lim_{g\rightarrow\infty}(1-\rho_{g,i,j})\log g=[\frac{(iu+jv)^2}{2}+\frac{i^2+j^2}{4}]C^2,
			\end{aligned}
		\end{equation*}
		which satisfies the first condition \eqref{2d_assp1.2} of the Theorem \ref{2d_thm2.2}, i.e. $\delta_{i,j}=[\frac{(iu+jv)^2}{2}+\frac{i^2+j^2}{4}]C^2$.
		
		To verify the other conditions, we let $l_g=(\log g)^{1/2}\log(\log g)$ which satisfies \eqref{2d_2.1}. Since $f(x)=|(1-x)\exp(-x/2)|$ is a positive decreasing function when $x\geq 3$, then we have
		\begin{equation*}
			\begin{aligned}
				&\lim_{g \to \infty} \sup_{\sqrt{i^2+j^2} \geq l_g} |\rho_{g,i,j}| \log g\\
				=&\lim_{g \to \infty} \sup_{\sqrt{i^2+j^2} \geq l_g}|[1-\frac{C^2}{2\log g}(iu+jv)^2]\exp(-\frac{(i^2+j^2)C^2}{4\log g})|\log g\\
				=&\lim_{g \to \infty} \sup_{\sqrt{i^2+j^2} \geq l_g}I_{\frac{C^2}{2\log g}(iu+jv)^2<1}[1-\frac{C^2}{2\log g}(iu+jv)^2]\exp(-\frac{(i^2+j^2)C^2}{4\log g})\log g+\\
				&\qquad I_{\frac{C^2}{2\log g}(iu+jv)^2\geq 1}[\frac{C^2}{2\log g}(iu+jv)^2-1]\exp(-\frac{(i^2+j^2)C^2}{4\log g})\log g\\
				\leq&\lim_{g \to \infty} \sup_{\sqrt{i^2+j^2} \geq l_g}I_{\frac{C^2}{2\log g}(iu+jv)^2<1}\exp(-\frac{(i^2+j^2)C^2}{4\log g})\log g+\\
				&\qquad I_{\frac{C^2}{2\log g}(iu+jv)^2\geq 1}[\frac{C^2}{2\log g}(i^2+j^2)-1]\exp(-\frac{(i^2+j^2)C^2}{4\log g})\log g\\
				\leq&\lim_{g \to \infty} \exp(-\frac{l_g^2C^2}{4\log g})\log g+ I_{\frac{C^2}{2\log g}(iu+jv)^2\geq 1}[\frac{C^2}{2\log g}l_g^2-1]\exp(-\frac{l_g^2C^2}{4\log g})\log g\\
				&(\text{Since }\frac{C^2}{2\log g}l_g^2\rightarrow\infty>3\text { as } g\rightarrow\infty\text{, the second part reaches its upper bound at }i^2+j^2= l_g^2) \\
				\leq&\lim_{g \to \infty}(\log g)^{1-\log(\log g)C^2}+\frac{(C^2\log(\log g)^2/2-1)\log g}{(\log g)^{\log(\log g)C^2}} \\
				=&0,
			\end{aligned} 
		\end{equation*}
		and the Condition \ref{2d_2.5} has been verified.
		
		To verify the last condition of Theorem \ref{2d_thm2.2} (condition \ref{2d_2.6}), we need to derive the bound of $\rho_{g,i,j}$. From Equation \ref{eq_corr_g_1st}, we can get
		\begin{equation*}
			\begin{aligned}
				\rho_{g,i,j}&=[1-\frac{C^2}{2\log g}(iu+jv)^2]\exp(-\frac{(i^2+j^2)C^2}{4\log g})\\
				&\geq-\frac{C^2}{2\log g}(iu+jv)^2\exp(-\frac{(i^2+j^2)C^2}{4\log g})\\
				&\geq-\frac{C^2}{2\log g}(i^2+j^2)(u^2+v^2)\exp(-\frac{(i^2+j^2)C^2}{4\log g})\text{ (using Cauchy–Schwarz inequality)}\\
				&=\frac{C^2}{2\log g}(i^2+j^2)\exp(-\frac{(i^2+j^2)C^2}{4\log g})\\
				&=-\frac{1}{2}x\exp(-\frac{x}{4})\text{ (Let }x=\frac{C^2}{\log g}(i^2+j^2))\\
				&\geq -2e^{-1}>-1,
			\end{aligned} 
		\end{equation*}
		and
		\begin{equation*}
			\begin{aligned}
				\rho_{g,i,j}&=(1-\frac{C^2}{2\log g}(iu+jv)^2)\exp(-\frac{(i^2+j^2)C^2}{4\log g})\\
				&=(1-\frac{C^2}{2\log g}(iu+jv)^2)\exp(-\frac{[(iu+jv)^2+(iv-ju)^2]C^2}{4\log g})\\
				&\leq|1-\frac{C^2}{2\log g}(iu+jv)^2|\exp(-\frac{(iu+jv)^2C^2}{4\log g})\\
				&=|1-y/2|\exp(-\frac{y}{4})\text{ (Let }y=\frac{C^2}{\log g}(iu+jv)^2).
			\end{aligned} 
		\end{equation*}
		The curve of $f_1(y)=|1-y/2|\exp(-\frac{y}{4})$ is as the left panel of Figure \ref{f(y)} which suggests that there exist a small fixed positive $\epsilon_1$ and $\delta_y$ such that $f(y)\leq\delta_y$ if $y\geq\epsilon_1$, where $\delta_y<1$ is a constant. 
		\begin{figure}[htbp]
			\center{\includegraphics[width=14cm]  {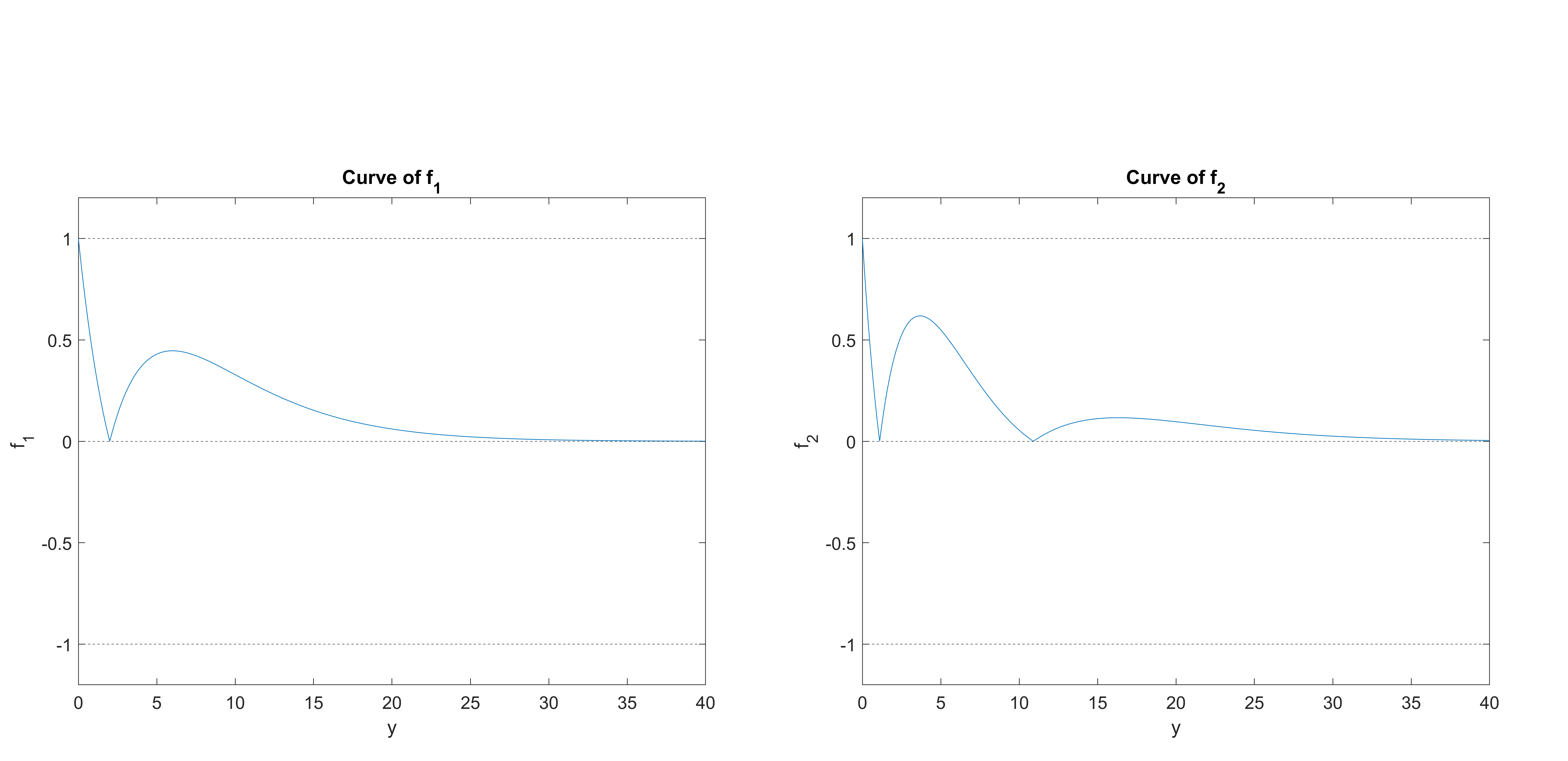}}
			\caption{Curve of function $f_1$ and $f_2$.}
			\label{f(y)}
		\end{figure} 
		
		If $\exp(-\frac{(iv-ju)^2C^2}{4\log g})\leq\delta_y$, i.e. $\frac{(iv-ju)^2C^2}{\log g}>-4\log\delta_y$,  we also have 
		\begin{equation*}
			\begin{aligned}
				\rho_{g,i,j}&=f(y)\exp(-\frac{(iv-ju)^2C^2}{4\log g})\leq\exp(-\frac{(iv-ju)^2C^2}{4\log g})\leq\delta_y.
			\end{aligned} 
		\end{equation*}
		Then 
		\begin{equation}
			\label{eq_bound1_1st}
			\begin{aligned}
				\eta_1\leq\rho_{g,i,j}\leq\delta_y,
			\end{aligned} 
		\end{equation}
		for convenience, we let $\eta_1=-2e^{-1}$. 
		
		On the other hand, if $\frac{C^2}{\log g}(iu+jv)^2=y<\epsilon_1$ and  $\exp(-\frac{(iv-ju)^2C^2}{4\log g})>\delta_y$, we can use Taylor expansion with Lagrange remainder on Equation \eqref{eq_corr_g_1st} and have
		\begin{equation*}
			\begin{aligned}
				\rho_{g,i,j}&= [1-\frac{C^2}{2\log g}(iu+jv)^2][1-\frac{(i^2+j^2)C^2}{4\log g}+\frac{(i^2+j^2)^2C^4}{32(\log g)^2}\exp(-\xi_1)]\\
				&\geq 1- [2(iu+jv)^2+(i^2+j^2)]\frac{C^2}{4\log g},
			\end{aligned}
		\end{equation*}
		where $\xi_1\in(0,\frac{(i^2+j^2)C^2}{4\log g})$ (when $g\rightarrow\infty$, $\frac{(i^2+j^2)C^2}{4\log g}>0$ clearly). Then we have $1-\rho_{g,i,j}\leq[2(iu+jv)^2+(i^2+j^2)]\frac{C^2}{4\log g}$. 
		\begin{equation*}
			\begin{aligned}
				\rho_{g,i,j}&= [1-\frac{C^2}{2\log g}(iu+jv)^2][1-\frac{(i^2+j^2)C^2}{4\log g}+\frac{(i^2+j^2)^2C^4}{32(\log g)^2}-\frac{(i^2+j^2)^3C^6}{384(\log g)^2}\exp(-\xi_2)]\\
				&\leq 1-[2(iu+jv)^2+(i^2+j^2)]\frac{C^2}{4\log g}+\left\{4(iu+jv)^2(i^2+j^2)+(i^2+j^2)^2\right\}\frac{C^4}{32(\log g)^2}\\
				&\leq 1-[2(iu+jv)^2+(i^2+j^2)]\frac{C^2}{4\log g}+\left\{2(iu+jv)^2(i^2+j^2)+(i^2+j^2)^2\right\}\frac{C^4}{16(\log g)^2}\\
				&=1-[2(iu+jv)^2+(i^2+j^2)]\frac{C^2}{4\log g}[1-(i^2+j^2)\frac{C^2}{4\log g}]\\
				&<1-[2(iu+jv)^2+(i^2+j^2)]\frac{C^2}{4\log g}\{1-\epsilon_1/4+\log \delta_y\}
			\end{aligned}
		\end{equation*}
		where $\xi_2\in(0,\frac{(i^2+j^2)C^2}{4\log g})$. Let $\epsilon^*_1=\epsilon_1/4-\log \delta_y$, then we will have $1-\rho_{g,i,j}\geq[2(iu+jv)^2+(i^2+j^2)]\frac{C^2}{4\log g}(1-\epsilon^*)$. We can set a small fixed $\epsilon$ like  $\epsilon_1=0.1$, then $\delta_y=f_1(0.1)\approx0.88$ and  $\epsilon^*\approx0.1<1$. Therefore, we have 
		\begin{equation}
			\label{eq_bound2_1st}
			\begin{aligned}
				0<[2(iu+jv)^2+(i^2+j^2)]\frac{C^2}{4\log g}(1-\epsilon^*)\leq1-\rho_{g,i,j}\leq[2(iu+jv)^2+(i^2+j^2)]\frac{C^2}{4\log g}.
			\end{aligned}
		\end{equation}
		Thus, we combine the two bounds from Equation \eqref{eq_bound1_1st} and \eqref{eq_bound2_1st} and verify condition \ref{2d_2.5} as the following.
		\begin{equation*}
			\begin{aligned}
				&\sum_{(i,j) \in \{0,1,...,l_g\}^2 \setminus \{0,1,...,m\}^2} g^{-2\frac{1-\rho_{g,i,j}}{1+\rho_{g,i,j}}} \frac{\left( \log g \right)^{-\frac{\rho_{g,i,j}}{1+\rho_{g,i,j}}}}{\sqrt{1-\rho^2_{g,i,j}}} \\
				&\leq\sum_{(i,j) \in \{0,1,...,l_g\}^2 \setminus \{0,1,...,m\}^2} g^{-2\frac{1-\delta_y}{1+\delta_y}}\frac{\left( \log g \right)^{-\frac{\eta_1}{1+\eta_1}}}{\sqrt{(1-\delta_y)(1+\eta_1)}}\\
				&\quad+\exp\left(-[2(iu+jv)^2+(i^2+j^2)]\frac{C^2}{4}(1-\epsilon^*) \right) \frac{\left( \log g \right)^{\frac{1-\rho_{g,i,j}}{2(1+\rho_{g,i,j})}}}{\sqrt{(1-\rho^2_{g,i,j})\log g}}   \\
				&\leq l_g^2 g^{-2\frac{1-\delta_y}{1+\delta_y}}\frac{\left( \log g \right)^{-\frac{\eta_1}{1+\eta_1}}}{\sqrt{(1-\delta_y)(1+\eta_1)}}+\\
				&\sum_{(i,j) \in \{0,1,...,l_g\}^2 \setminus \{0,1,...,m\}^2} \exp\left(-[2(iu+jv)^2+(i^2+j^2)]\frac{C^2}{4}(1-\epsilon^*) \right) \frac{\exp\left( \log(\log g) {[(iu+jv)^2+\frac{i^2+j^2}{4}]\frac{C^2}{2\log g}}\right)}{\sqrt{[2(iu+jv)^2+(i^2+j^2)]\frac{C^2}{4}(1-\epsilon^*)}}\\
				&=\log g (\log(\log g))^2g^{-2\frac{1-\delta_y}{1+\delta_y}}\frac{\left( \log g \right)^{-\frac{\eta_1}{1+\eta_1}}}{\sqrt{(1-\delta_y)(1+\eta_1)}}+\\
				&\sum_{(i,j) \in \{0,1,...,l_g\}^2 \setminus \{0,1,...,m\}^2} \frac{1}{\exp\left([2(iu+jv)^2+(i^2+j^2)]\frac{C^2}{4}(1-\epsilon^*-\frac{\log(\log g)}{2\log g}) \right)\sqrt{[2(iu+jv)^2+(i^2+j^2)]\frac{C^2}{4}(1-\epsilon^*)}}\\
				&\rightarrow 0 \text{ as }g\rightarrow\infty \text{ and } m\rightarrow\infty.
			\end{aligned}
		\end{equation*}
		The above equation converges to 0 because both parts will converge to 0 when $g\rightarrow\infty \text{ and } m\rightarrow\infty$. Therefore condition \eqref{2d_2.6} of Theorem (\ref{2d_thm2.2}) is verified. After verifying all the conditions, we utilize Theorem (\ref{2d_thm2.2}) to calculate the value of $\vartheta$,
		\begin{equation*}
			\begin{aligned}
				\vartheta=P(Y/2+\sqrt{2\delta_{i,j}}W_{i,j}\leq2\delta_{i,j}\text{ for all i,j}\geq1).
			\end{aligned} 
		\end{equation*}
		where $Y$ is a standard exponential random variable independent of ($W_{i,j}$). and the $W_{i,j}$ have a jointly normal distribution with mean 0 and covariance follows Equation \eqref{2d_2.4}.
		Then
		\begin{equation*}
			\begin{aligned}
				\vartheta&=P(Y/2+\sqrt{2\delta_{i,j}}W_{i,j}\leq2\delta_{i,j}\text{ for all i,j}\geq1)\\
				&\leq P(Y/2+\sqrt{2\delta_{1,0}}W_{1,0}\leq2\delta_{1,0})\\
				&= \int_{-\infty}^{\sqrt{2\delta_{1,0}}}(1-e^{2(2\delta_{1,0}-\sqrt{2\delta_{1,0}}z)})\frac{e^{-z^2/2}}{\sqrt{2\pi}} dz  \\
				&= \Phi(\sqrt{2\delta_{1,0}})-\Phi(-\sqrt{2\delta_{1,0}}) \\
				&=2\Phi(\sqrt{(u^2+\frac{1}{2})C^2})-1.
			\end{aligned} 
		\end{equation*}
		
		Similarly, we also have $\vartheta\leq2\Phi(\sqrt{2\delta_{0,1}})-1=2\Phi(\sqrt{(v^2+\frac{1}{2})C^2})-1$. Since $u^2+v^2=1$, we have 
		\begin{equation*}
			\begin{aligned}
				\vartheta&\leq\min\{2\Phi(\sqrt{2\delta_{1,0}})-1,2\Phi(\sqrt{2\delta_{0,1}})-1\}\\
				&=2\Phi(\sqrt{(\min(u^2,v^2)+\frac{1}{2})C^2})-1\\
				&=2\Phi(C)-1.
			\end{aligned} 
		\end{equation*}
		Therefore, from \eqref{2d_lim}, we can get the limit of the set of random fields $\hat{T}_{g,i,j}$ as 
		\begin{equation*}
			\lim_{g\rightarrow\infty}P\left\{\max_{i,j=1,...,g}\hat{T}_{g,i,j}\leq u_{g^2}(x)\right\}=\exp(-\vartheta \exp(-x)),
		\end{equation*}
		where Equation \ref{eq-vartheta-1st} follows
		\begin{equation*}
			\vartheta\leq2\Phi(C)-1.
		\end{equation*}
		
	\end{proof}
	\subsection{Derivation of \eqref{eq_corr}}
	\label{Derive_2nd_corr}
	\begin{equation*}
		\begin{aligned}
			Denominator=&\int\int((ux+vy)^2-h^2)^2K_{h}(x,y)^2dxdy\\
			=&\int\int((ux+vy)^4-2(ux+vy)^2h^2+h^4)K_{h}(x,y)^2dxdy\\
			=&\int\int(u^4x^4+6u^2v^2x^2y^2+v^4y^4-2h^2u^2x^2-2h^2v^2y^2+h^4)K_{h}(x,y)^2dxdy,
		\end{aligned}
	\end{equation*}
	where we have
	\begin{equation*}
		\begin{aligned}
			&\int\int x^4K_{h}(x,y)^2dxdy=\int\int x^4\frac{1}{4\pi^2h^4}\exp(-\frac{x^2+y^2}{h^2})dxdy\\
			=&\frac{1}{4\pi h^2}\int x^4\frac{1}{\sqrt{\pi h^2}}\exp(-\frac{x^2}{h^2})dx\int \frac{1}{\sqrt{\pi h^2}}\exp(-\frac{y^2}{h^2})dy\\
			=&\frac{1}{4\pi h^2}\frac{3h^4}{4}\\
			=&\frac{3h^2}{16\pi},
		\end{aligned}
	\end{equation*}
	and
	\begin{equation*}
		\begin{aligned}
			&\int\int x^2y^2K_{h}(x,y)^2dxdy=\int\int x^2y^2\frac{1}{4\pi^2h^4}\exp(-\frac{x^2+y^2}{h^2})dxdy\\
			=&\frac{1}{4\pi h^2}\int x^2\frac{1}{\sqrt{\pi h^2}}\exp(-\frac{x^2}{h^2})dx\int y^2\frac{1}{\sqrt{\pi h^2}}\exp(-\frac{y^2}{h^2})dy\\
			=&\frac{1}{4\pi h^2}(\frac{h^2}{2})^2\\
			=&\frac{h^2}{16\pi}.
		\end{aligned}
	\end{equation*}
	Therefore, we can get the value of denominator in Equation~(\ref{eq_corr}) as below,
	\begin{equation}
		\label{eq_denom}
		\begin{aligned}
			Denominator=&u^4\frac{3h^2}{16\pi}+6u^2v^2\frac{h^2}{16\pi}+v^4\frac{3h^2}{16\pi}-2h^2u^2\frac{1}{8\pi}-2h^2v^2\frac{1}{8\pi}+\frac{h^2}{4\pi}\\
			=&\frac{h^2}{\pi}[\frac{3}{16}(u^2+v^2)^2-\frac{1}{4}u^2-\frac{1}{4}v^2+\frac{1}{4}]\\
			=&\frac{3h^2}{16\pi}.
		\end{aligned}
	\end{equation}
	For numertor, we have 
	\begin{equation*}
		\begin{aligned}
			&Numerator\\
			=&\int\int\{[u(x-i)+v(y-j)]^2-h^2\}K_{h}(x-i,y-j)\left[(ux+vy)^2-h^2\right]K_{h}(x,y)dxdy\\
			=&\int\int\left[u^2(x-i)^2+2uv(x-i)(y-j)+v^2(y-j)^2-h^2\right]\left[(ux+vy)^2-h^2\right]\\
			&\qquad K_{h}(x-i,y-j)K_{h}(x,y)dxdy\\
			=&\int\int\left[(ux+vy)^2-2(iu+jv)(ux+vy)+(ui+vj)^2-h^2 \right] \left[(ux+vy)^2-h^2\right]\\
			&\qquad\exp(-\frac{1}{2h^2}[(x-i)^2+(y-j)^2+x^2+y^2])dxdy\\
			=&\int\int\left\{ (ux+vy)^4-2(iu+jv)(ux+vy)^3+[(ui+vj)^2-2h^2](ux+vy)^2+2h^2(iu+jv)(ux+vy)-[(ui+vj)^2-h^2]h^2 \right\}\\
			&\qquad\exp(-\frac{1}{2h^2}[(x-i)^2+(y-j)^2+x^2+y^2])dxdy.
		\end{aligned}
	\end{equation*}
	
	Then we calculate \eqref{eq_calc_F} for $k=3,4$.
	In order to calculate the simplified expression of the numerator of Equation \ref{eq_corr}, let
	\begin{equation*}
		\begin{aligned}
			F_{i}(3)&=\frac{1}{2\pi h^2}\int_{-\infty}^{\infty}x^3\exp(-\frac{1}{2h^2}[(x-i)^2+x^2])dx\\
			&=\frac{1}{2\sqrt{\pi h^2}}\exp(-\frac{i^2}{4h^2})\int_{-\infty}^{\infty}x^3\frac{1}{\sqrt{\pi h^2}}\exp(-\frac{1}{h^2}[(x-\frac{i}{2})^2])dx\\
			&=\frac{1}{2\sqrt{\pi h^2}}\exp(-\frac{i^2}{4h^2})\int_{-\infty}^{\infty}[(x-\frac{i}{2})^3+\frac{3}{2}ix^2-\frac{3}{4}i^2x+\frac{i^3}{8}]\frac{1}{\sqrt{\pi h^2}}\exp(-\frac{1}{h^2}[(x-\frac{i}{2})^2])dx\\
			&=\frac{3}{2}iF_{i}(2)-\frac{3}{4}i^2F_{i}(1)+\frac{i^3}{8}F_{i}(0)\\
			&=(\frac{3ih^2}{4}+\frac{i^3}{8})F_{i}(0),
		\end{aligned}
	\end{equation*}
	\begin{equation*}
		\begin{aligned}
			F_{i}(4)&=\frac{1}{2\pi h^2}\int_{-\infty}^{\infty}x^4\exp(-\frac{1}{2h^2}[(x-i)^2+x^2])dx\\
			&=\frac{1}{2\sqrt{\pi h^2}}\exp(-\frac{i^2}{4h^2})\int_{-\infty}^{\infty}x^4\frac{1}{\sqrt{\pi h^2}}\exp(-\frac{1}{h^2}[(x-\frac{i}{2})^2])dx\\
			&=F_{i}(0)\int_{-\infty}^{\infty}[(x-\frac{i}{2})^4+2ix^3-\frac{3}{2}i^2x^2+\frac{i^3}{2}x-\frac{i^4}{16}]\frac{1}{\sqrt{\pi h^2}}\exp(-\frac{1}{h^2}[(x-\frac{i}{2})^2])dx\\
			&=\frac{3h^4}{4}F_{i}(0)+2iF_{i}(3)-\frac{3}{2}i^2F_{i}(2)+\frac{i^3}{2}F_{i}(1)-\frac{i^4}{16}F_{i}(0)\\
			&=(\frac{3h^4}{4}+\frac{3i^2h^2}{4}+\frac{i^4}{16})F_{i}(0).
		\end{aligned}
	\end{equation*}
	
	Then we can get the value of the numerator of Equation \ref{eq_corr} as below,
	\begin{equation*}
		\begin{aligned}
			&Numerator\\
			=&\int\int\left[ (ux+vy)^4-2(iu+jv)(ux+vy)^3+[(ui+vj)^2-2h^2](ux+vy)^2+2h^2(iu+jv)(ux+vy)-[(ui+vj)^2-h^2]h^2 \right]\\
			&\qquad\exp(-\frac{1}{2h^2}[(x-i)^2+(y-j)^2+x^2+y^2])dxdy\\
			=&u^4F_i(4)F_j(0)+4u^3v F_i(3)F_j(1)+6u^2v^2 F_i(2)F_j(2)+4uv^3 F_i(1)F_j(3)+ v^4F_i(0)F_j(4)\\
			&-2(iu+jv)\left\{u^3F_i(3)F_j(0)+3u^2v F_i(2)F_j(1)+3uv^2 F_i(1)F_j(2)+v^3 F_i(0)F_j(3)\right\} \\
			&+[(ui+vj)^2-2h^2]\left\{u^2F_i(2)F_j(0)+2uv F_i(1)F_j(1)+v^2 F_i(0)F_j(2)\right\} +2h^2(iu+jv)\left\{uF_i(1)F_j(0)+v F_i(0)F_j(1)\right\} \\
			&-[(ui+vj)^2-h^2]h^2F_i(0)F_j(0)\\
			=&F_i(0)F_j(0)N_{i,j,u,v}(h),
		\end{aligned}
	\end{equation*}
	let $N_{i,j,u,v}(h)$ be a function of $h$ based on $i,j,u,v$. The power of $h$ in $N_{i,j,u,v}(h)$ whose coefficients are not zero are 0, 2 and 4.
	
	The coefficient of power 4 is 
	\begin{equation*}
		\begin{aligned}
			\frac{3}{4}u^4+\frac{3}{4}v^4+6u^2v^2(\frac{1}{2})^2=\frac{3}{4}(u^2+v^2)^2=\frac{3}{4},
		\end{aligned}
	\end{equation*}
	and the coefficient of power 2 is 
	\begin{equation*}
		\begin{aligned}
			&\frac{3}{4}u^4i^2+\frac{3}{4}v^4j^2+6u^2v^2(\frac{i^2+j^2}{8})+4u^3v\frac{j}{2}\frac{3i}{4}+4uv^3\frac{i}{2}\frac{3j}{4}\\
			&-2(iu+jv)\left\{\frac{3}{4}u^3i+3u^2v\frac{j}{2}\frac{1}{2}+3uv^2\frac{i}{2}\frac{1}{2}+\frac{3}{4}v^3j\right\}+(ui+vj)^2\frac{u^2+v^2}{2}\\
			&-2\left\{\frac{u^2i^2}{4}+\frac{uvij^2}{2}+\frac{v^2j^2}{4}\right\}+(iu+jv)^2-(iu+jv)^2\\
			=&\left\{\frac{3}{4}u^4i^2+\frac{3}{4}v^4j^2+\frac{3}{4}u^2v^2(i^2+j^2)+\frac{3}{2}(u^3v+uv^3)ij-\frac{3}{2}(iu+jv)^2+\frac{1}{2}(iu+jv)^2-\frac{1}{2}(iu+jv)^2\right\}\\
			=&(\frac{3}{4}u^2i^2+\frac{3}{4}v^2j^2+\frac{3}{2}uvij-\frac{3}{2}(iu+jv)^2)\\
			=&-\frac{3}{4}(iu+jv)^2,
		\end{aligned}
	\end{equation*}
	and the coefficient of power 0 is 
	\begin{equation*}
		\begin{aligned}
			&\frac{1}{16}u^4i^4+\frac{1}{16}v^4j^4+6u^2v^2(\frac{i^2j^2}{16})+4u^3v\frac{j}{2}\frac{i^3}{8}+4uv^3\frac{i}{2}\frac{j^3}{8}\\
			&-2(iu+jv)\left\{\frac{1}{8}u^3i^3+3u^2v\frac{j}{2}\frac{i^2}{4}+3uv^2\frac{i}{2}\frac{j^2}{4}+\frac{1}{8}v^3j^3\right\}\\
			&+(ui+vj)^2\left\{\frac{u^2i^2}{4}+\frac{uvij^2}{2}+\frac{v^2j^2}{4}\right\}\\
			=&\frac{1}{16}(iu+jv)^4.
		\end{aligned}
	\end{equation*}
	Therefore, we can conclude that 
	\begin{equation*}
		\begin{aligned}
			N_{i,j,u,v}(h)=\frac{3}{4}h^4-\frac{3}{4}(iu+jv)^2h^2+\frac{1}{16}(iu+jv)^4
		\end{aligned},
	\end{equation*}
	and
	\begin{equation}
		\label{eq_nume}
		\begin{aligned}
			Numerator		=[\frac{3}{4}h^4-\frac{3}{4}(iu+jv)^2h^2+\frac{1}{16}(iu+jv)^4]\frac{1}{4\pi h^2}\exp(-\frac{i^2+j^2}{4h^2}).
		\end{aligned}
	\end{equation}
	Thus, from \ref{eq_denom} and \ref{eq_nume}, we can get the simplified expression of Equation \ref{eq_corr} as below
	\begin{equation*}
		\label{eq_corr_value}
		\begin{aligned}
			&Corr(T_{l,k},T_{l+i,k+j})=[1-\frac{1}{h^2}(iu+jv)^2+\frac{1}{12h^4}(iu+jv)^4]\exp(-\frac{i^2+j^2}{4h^2}).
		\end{aligned}
	\end{equation*}
	\subsection{Proof of Theorem \ref{SSS_extreme}}
	\label{proof_2nd_thm}
	\begin{proof}
		By using Taylor expansion of the exponential in \eqref{eq_corr_g_2nd}, we get
		\begin{equation}
			\label{eq_rho_taylor}
			\begin{aligned}
				\rho_{g,i,j}&= [1-\frac{C^2}{\log g}(iu+jv)^2+\frac{1}{12}\frac{C^4}{(\log g)^2}(iu+jv)^4][1-\frac{(i^2+j^2)C^2}{4\log g}+O((\log g)^{-2})]\\
				&=1-[(iu+jv)^2+\frac{i^2+j^2}{4}]\frac{C^2}{\log g}+O((\log g)^{-2}).
			\end{aligned}
		\end{equation}
		Therefore, we have
		\begin{equation*}
			\begin{aligned}
				\lim_{g\rightarrow\infty}(1-\rho_{g,i,j})\log g=[(iu+jv)^2+\frac{i^2+j^2}{4}]C^2,
			\end{aligned}
		\end{equation*}
		which satisfies the first condition of the Theorem (\ref{2d_thm2.2}), i.e. $\delta_{i,j}=[(iu+jv)^2+\frac{i^2+j^2}{4}]C^2$.
		
		Similar to the proof of Theorem \ref{SSS_extreme_1st}, we let $l_g=(\log g)^{1/2}\log(\log g)$ which satisfies \ref{2d_2.1}. From Equation \ref{eq_rho_taylor}, clearly $\rho_{g,i,j}$ is a decreasing and positive function of $i$ and $j$ when they are large enough. Thus, as $g$ is large enough, we have
		\begin{equation*}
			\begin{aligned}
				&\lim_{g \to \infty} \sup_{\sqrt{i^2+j^2} \geq l_g} |\rho_{g,i,j}| \log g\\
				=&\lim_{g \to \infty} \sup_{\sqrt{i^2+j^2} = l_g}\rho_{g,i,j}\log g\\
				=&\lim_{g \to \infty}\sup_{\sqrt{i^2+j^2} = l_g}[1-\frac{C^2}{\log g}(iu+jv)^2+\frac{1}{12}\frac{C^4}{(\log g)^2}(iu+jv)^4]\exp(-\frac{(i^2+j^2)C^2}{4\log g})\log g\\
				\leq&\lim_{g \to \infty}\sup_{\sqrt{i^2+j^2} = l_g}[1+\frac{1}{12}\frac{C^4}{(\log g)^2}(i+j)^4]\exp(-\frac{l_g^2C^2}{4\log g})\log g\\
				\leq&\lim_{g \to \infty}\sup_{\sqrt{i^2+j^2} = l_g}[1+\frac{1}{3}\frac{C^4}{(\log g)^2}(i^2+j^2)^2]\exp(-\frac{l_g^2C^2}{4\log g})\log g\\
				=&\lim_{g \to \infty}[1+\frac{1}{3}\frac{C^4}{(\log g)^2}l_g^4]\exp(-\frac{l_g^2C^2}{4\log g})\log g\\
				=&\lim_{g \to \infty}\frac{\left(1+\frac{C^4(\log(\log g))^4}{3}\right)\log g}{(\log g)^{C^2\log(\log g)/4}}\\
				=&0
			\end{aligned} 
		\end{equation*}
		and the Condition \ref{2d_2.5} has been verified.
		
		To verify the last condition of Theorem \ref{2d_thm2.2} (condition \ref{2d_2.6}), we need to derive the bound of $\rho_{g,i,j}$. From Equation \ref{eq_rho_taylor}, we can get
		\begin{equation*}
			\begin{aligned}
				\rho_{g,i,j}&=[1-\frac{C^2}{\log g}(iu+jv)^2+\frac{1}{12}\frac{C^4}{(\log g)^2}(iu+jv)^4]\exp(-\frac{(i^2+j^2)C^2}{4\log g})\\
				&\geq(\frac{1}{\sqrt{3}}-1)\frac{C^2}{\log g}(iu+jv)^2\exp(-\frac{(i^2+j^2)C^2}{4\log g})\\
				&\geq(\frac{1}{\sqrt{3}}-1)\frac{C^2}{\log g}(i^2+j^2)(u^2+v^2)\exp(-\frac{(i^2+j^2)C^2}{4\log g})\text{ (using Cauchy–Schwarz inequality)}\\
				&=(\frac{1}{\sqrt{3}}-1)\frac{C^2}{\log g}(i^2+j^2)\exp(-\frac{(i^2+j^2)C^2}{4\log g})\\
				&=(\frac{1}{\sqrt{3}}-1)x\exp(-\frac{x}{4})\text{ (Let }x=\frac{C^2}{\log g}(i^2+j^2))\\
				&\geq 4e^{-1}(\frac{1}{\sqrt{3}}-1)=-0.62\ldots>-1,
			\end{aligned} 
		\end{equation*}
		and
		\begin{equation*}
			\begin{aligned}
				\rho_{g,i,j}&=[1-\frac{C^2}{\log g}(iu+jv)^2+\frac{1}{12}\frac{C^4}{(\log g)^2}(iu+jv)^4]\exp(-\frac{(i^2+j^2)C^2}{4\log g})\\
				&=[1-\frac{C^2}{\log g}(iu+jv)^2+\frac{1}{12}\frac{C^4}{(\log g)^2}(iu+jv)^4]\exp(-\frac{(i^2+j^2)(u^2+v^2)C^2}{4\log g})\\
				&\leq[1-\frac{C^2}{\log g}(iu+jv)^2+\frac{1}{12}\frac{C^4}{(\log g)^2}(iu+jv)^4]\exp(-\frac{(iu+jv)^2C^2}{4\log g})\\
				&\quad\text{(using Cauchy–Schwarz inequality, since we are deriving its upper bound,}\\
				&\qquad\text{we only need to consider the first bracket as positive)}\\
				&=[1-y+\frac{1}{12}y^2]\exp(-\frac{y}{4})\text{ (Let }y=\frac{C^2}{\log g}(iu+jv)^2).
			\end{aligned} 
		\end{equation*}
		The curve of $f(y)=[1-y+\frac{1}{12}y^2]\exp(-\frac{y}{4})$ is as Figure \ref{f(y)} which suggests that there exist a small fixed positive $\epsilon$ and $\delta_y$ such that $f(y)\leq\delta_y$ if $y\geq\epsilon$, where $\delta_y<1$ is a constant. Then we will have
		\begin{equation}
			\label{eq_bound1}
			\begin{aligned}
				\eta\leq\rho_{g,i,j}\leq\delta_y,
			\end{aligned} 
		\end{equation}
		for convenience, we let $\eta=4e^{-1}(\frac{1}{\sqrt{3}}-1)$. 
		
		If $\frac{C^2}{\log g}(iu+jv)^2=y<\epsilon$, we can extend \ref{eq_rho_taylor} and have
		\begin{equation*}
			\begin{aligned}
				\rho_{g,i,j}&= [1-\frac{C^2}{\log g}(iu+jv)^2+\frac{1}{12}\frac{C^4}{(\log g)^2}(iu+jv)^4][1-\frac{(i^2+j^2)C^2}{4\log g}+\frac{(i^2+j^2)^2C^4}{32(\log g)^2}+O((\log g)^{-3})]\\
				&=1-[(iu+jv)^2+\frac{i^2+j^2}{4}]\frac{C^2}{\log g}+\left\{\frac{(iu+jv)^4}{12}+\frac{(iu+jv)^2(i^2+j^2)}{4}+\frac{(i^2+j^2)^2}{32}\right\}\frac{C^4}{(\log g)^2}\\
				&\quad+O((\log g)^{-3}),
			\end{aligned}
		\end{equation*}
		then clearly $1-\rho_{g,i,j}\leq[(iu+jv)^2+\frac{i^2+j^2}{4}]\frac{C^2}{\log g}$. Since both $u$ and $v$ are not zero, suppose $0<u\leq v<1$, then $i^2+j^2\leq\frac{(iu+jv)^2}{u^2}$. Next, we can get the lower bound of $1-\rho_{g,i,j}$.
		\begin{equation*}
			\begin{aligned}
				\rho_{g,i,j}&=1-[(iu+jv)^2+\frac{i^2+j^2}{4}]\frac{C^2}{\log g}+\left\{\frac{(iu+jv)^4}{12}+\frac{(iu+jv)^2(i^2+j^2)}{4}+\frac{(i^2+j^2)^2}{32}\right\}\frac{C^4}{(\log g)^2}\\
				&\quad+O((\log g)^{-3})\\
				&\leq1-[(iu+jv)^2+\frac{i^2+j^2}{4}]\frac{C^2}{\log g}+\left\{(\frac{1}{12}+\frac{s}{u^2})(iu+jv)^4+(\frac{1}{4}-s+\frac{1}{32u^2})(iu+jv)^2(i^2+j^2)\right\}\frac{C^4}{(\log g)^2}\\
				&\quad+O((\log g)^{-3})\\
			\end{aligned}
		\end{equation*}
		for any $s\in[0,\frac{1}{4}]$. When $s=\frac{\frac{11}{12}u^2+\frac{1}{8}}{4u^2+1}$, $\frac{1}{12}+\frac{s}{u^2}=4(\frac{1}{4}-s+\frac{1}{32u^2})$. Since $u^2\in(0,\frac{1}{2}]$, we have $\frac{\frac{11}{12}u^2+\frac{1}{8}}{4u^2+1}\leq\frac{7}{36}<\frac{1}{4}$. Let $\epsilon_s=\frac{1}{12}+\frac{\frac{11}{12}u^2+\frac{1}{8}}{u^2(4u^2+1)}$, then
		\begin{equation*}
			\begin{aligned}
				\rho_{g,i,j} &\leq1-[(iu+jv)^2+\frac{i^2+j^2}{4}]\frac{C^2}{\log g}+\left\{\epsilon_s(iu+jv)^4+\frac{\epsilon_s}{4}(iu+jv)^2(i^2+j^2)\right\}\frac{C^4}{(\log g)^2}+O((\log g)^{-3})\\
				&=1-[(iu+jv)^2+\frac{i^2+j^2}{4}]\frac{C^2}{\log g}\left\{1-\epsilon_s(iu+jv)^2\frac{C^2}{\log g}\right\}+O((\log g)^{-3})\\
				&<1-[(iu+jv)^2+\frac{i^2+j^2}{4}]\frac{C^2}{\log g}(1-\epsilon_s\epsilon),
			\end{aligned}
		\end{equation*}
		i.e.
		\begin{equation}
			\label{eq_bound2}
			\begin{aligned}
				[(iu+jv)^2+\frac{i^2+j^2}{4}]\frac{C^2}{\log g}(1-\epsilon_s\epsilon)\leq1-\rho_{g,i,j}\leq[(iu+jv)^2+\frac{i^2+j^2}{4}]\frac{C^2}{\log g}.
			\end{aligned}
		\end{equation}

		Thus, we combine the two bounds from Equation \ref{eq_bound1} and \ref{eq_bound2} and verify condition \ref{2d_2.5} as the following.
		\begin{equation*}
			\begin{aligned}
				&\sum_{(i,j) \in \{0,1,...,l_g\}^2 \setminus \{0,1,...,m\}^2} g^{-2\frac{1-\rho_{g,i,j}}{1+\rho_{g,i,j}}} \frac{\left( \log g \right)^{-\frac{\rho_{g,i,j}}{1+\rho_{g,i,j}}}}{\sqrt{1-\rho^2_{g,i,j}}} \\
				&\leq\sum_{(i,j) \in \{0,1,...,l_g\}^2 \setminus \{0,1,...,m\}^2} g^{-2\frac{1-\delta_y}{1+\delta_y}}\frac{\left( \log g \right)^{-\frac{\eta}{1+\eta}}}{\sqrt{(1-\delta_y)(1+\eta)}}\\
				&\quad+\exp\left(-2[(iu+jv)^2+\frac{i^2+j^2}{4}]C^2(1-\epsilon_s\epsilon) \right) \frac{\left( \log g \right)^{\frac{1-\rho_{g,i,j}}{2(1+\rho_{g,i,j})}}}{\sqrt{(1-\rho^2_{g,i,j})\log g}}   \\
				&\leq l_g^2 g^{-2\frac{1-\delta_y}{1+\delta_y}}\frac{\left( \log g \right)^{-\frac{\eta}{1+\eta}}}{\sqrt{(1-\delta_y)(1+\eta)}}+\\
				&\sum_{(i,j) \in \{0,1,...,l_g\}^2 \setminus \{0,1,...,m\}^2} \exp\left(-2[(iu+jv)^2+\frac{i^2+j^2}{4}]C^2(1-\epsilon_s\epsilon) \right) \frac{\exp\left( \log(\log g) {[(iu+jv)^2+\frac{i^2+j^2}{4}]\frac{C^2}{2\log g}}\right)}{\sqrt{[(iu+jv)^2+\frac{i^2+j^2}{4}]C^2(1-\epsilon_s\epsilon)}}\\
				&=\log g (\log(\log g))^2g^{-2\frac{1-\delta_y}{1+\delta_y}}\frac{\left( \log g \right)^{-\frac{\eta}{1+\eta}}}{\sqrt{(1-\delta_y)(1+\eta)}}+\\
				&\sum_{(i,j) \in \{0,1,...,l_g\}^2 \setminus \{0,1,...,m\}^2} \exp\left(-2[(iu+jv)^2+\frac{i^2+j^2}{4}]C^2(1-\epsilon_s\epsilon) \right) \frac{\exp\left( \log(\log g) {[(iu+jv)^2+\frac{i^2+j^2}{4}]\frac{C^2}{2\log g}}\right)}{\sqrt{[(iu+jv)^2+\frac{i^2+j^2}{4}]C^2(1-\epsilon_s\epsilon)}}\\
				&\rightarrow 0 \text{ as }g\rightarrow\infty \text{ and } m\rightarrow\infty.
			\end{aligned}
		\end{equation*}
		The above equation converges to 0 because both parts will converge to 0 when $g\rightarrow\infty \text{ and } m\rightarrow\infty$. Therefore condition \ref{2d_2.6} of Theorem (\ref{2d_thm2.2}) is verified. After verifying all the conditions, we utilize Theorem (\ref{2d_thm2.2}) to calculate the value of $\vartheta$,
		\begin{equation*}
			\begin{aligned}
				\vartheta=P(Y/2+\sqrt{2\delta_{i,j}}W_{i,j}\leq2\delta_{i,j}\text{ for all i,j}\geq1).
			\end{aligned} 
		\end{equation*}
		where $Y$ is a standard exponential random variable independent of ($W_{i,j}$). and the $W_{i,j}$ have a jointly normal distribution with mean 0 and 
		\begin{equation*}
			\begin{aligned}
				EW_{i_1,j_1}W_{i_2,j_2}=\frac{\delta_{i_1,j_1}+\delta_{i_2,j_2}-\delta_{i_1-i_2,j_1-j_2}}{2(\delta_{i_1,j_1}\delta_{i_2,j_2})^{1/2}}.
			\end{aligned} 
		\end{equation*}
		Then
		\begin{equation*}
			\begin{aligned}
				\vartheta&=P(Y/2+\sqrt{2\delta_{i,j}}W_{i,j}\leq2\delta_{i,j}\text{ for all i,j}\geq1)\\
				&\leq P(Y/2+\sqrt{2\delta_{1,0}}W_{1,0}\leq2\delta_{1,0})\\
				&=E(P(Y/2\leq 2\delta_{1,0}-\sqrt{2\delta_{1,0}}Z)|Z)\\
				&= \int_{-\infty}^{\sqrt{2\delta_{1,0}}}(1-e^{2(2\delta_{1,0}-\sqrt{2\delta_{1,0}}z)})\frac{e^{-z^2/2}}{\sqrt{2\pi}} dz  \\
				&= \Phi(\sqrt{2\delta_{1,0}})-\frac{1}{\sqrt{2\pi}}\int_{-\infty}^{\sqrt{2\delta_{1,0}}} e^{-(z-2\sqrt{2\delta_{1,0}})^2/2}dz \\
				&= \Phi(\sqrt{2\delta_{1,0}})-\frac{1}{\sqrt{2\pi}}\int_{-\infty}^{-\sqrt{2\delta_{1,0}}} e^{-z^2/2}dz \\
				&= \Phi(\sqrt{2\delta_{1,0}})-\Phi(-\sqrt{2\delta_{1,0}}) \\
				&=2\Phi(\sqrt{2\delta_{1,0}})-1\\
				&=2\Phi(\sqrt{2(u^2+\frac{1}{4})C^2})-1.
			\end{aligned} 
		\end{equation*}
		
		Similarly, we also have $\vartheta\leq2\Phi(\sqrt{2\delta_{0,1}})-1=2\Phi(\sqrt{2(v^2+\frac{1}{4})C^2})-1$. Since $u^2+v^2=1$, we have 
		\begin{equation*}
			\begin{aligned}
				\vartheta&\leq\min\{2\Phi(\sqrt{2\delta_{1,0}})-1,2\Phi(\sqrt{2\delta_{0,1}})-1\}\\
				&=2\Phi(\sqrt{2(\min(u^2,v^2)+\frac{1}{4})C^2})-1\\
				&\leq2\Phi(\sqrt{2(\frac{1}{2}+\frac{1}{4})C^2})-1\\
				&=2\Phi(\sqrt{\frac{3}{2}C^2})-1\\
				&=2\Phi(\frac{\sqrt{6}C}{2})-1
			\end{aligned} 
		\end{equation*}
		Then Equation \ref{eq-vartheta} follows.
	\end{proof}
\end{appendices}

\section*{Data Availability}
The data used in Section~\ref{subsec-power} and Section~\ref{sec_realdata} can be found in the GitHub repository: \url{https://github.com/jerryliu01998/Advanced_SSS/tree/main/Data}.

\section*{Code Availability}
All the custom code used in this research can be found in the specified GitHub repository: \url{https://github.com/jerryliu01998/Advanced_SSS}.

\section*{Acknowledgement}
The research of Rui Liu and  J. S. Marron  was partially supported by the National Science Foundation under Grants DMS-2113404 and DMS-2515765, and by a UNC Computational Medicine Program Pilot Award. Jan Hannig’s research was supported in part by the National Science Foundation under Grants DMS-2113404, DMS-2210337, and DMS-2515303, and by the United States–Israel Binational Science Foundation (BSF), Jerusalem, under Grant No. 2024055.

\bibliographystyle{plainnat}
\bibliography{Ref_SSS}

\end{document}